\definecolor{uuuuuu}{rgb}{0.26666666666666666,0.26666666666666666,0.26666666666666666}
\definecolor{xdxdff}{rgb}{0.49019607843137253,0.49019607843137253,1.}
\definecolor{ffqqqq}{rgb}{1.,0.,0.}
\newtheorem{theorem}{Theorem}[section]
\newtheorem{lemma}[subsection]{Lemma}
\theoremstyle{definition}
\newtheorem{remark}[subsection]{Remark}
\newtheorem{definition}[subsection]{Definition}
\newtheorem{corollary}[theorem]{Corollary}
\theoremstyle{remark}
\numberwithin{equation}{section}
\title{Analysis of Fractal Dimension of Mixed Riemann-Liouville Fractional Integral}
\author{Subhash Chandra and Syed Abbas}
\address{School of Basic Sciences, Indian Institute of Technology Mandi\\ Kamand (H.P.) - 175005, India}
\email{sahusubhash77@gmail.com; sabbas.iitk@gmail.com (Email of corresponding author)}
\begin{document}
\subjclass[2010]{26A33, 28A80, 28A78, 26A30}
\keywords{Box dimension, Hausdorff dimension, Riemann-Liouville fractional integral, H\"{o}lder condition, Bounded variation}

\begin{abstract}
 
In this article, we investigate fractal dimension of the graph of the mixed Riemann-Liouville fractional integral for various choice of continuous functions on a rectangular region. We estimate bounds for the box dimension and the Hausdorff dimension of the graph of the mixed Riemann-Liouville fractional integral of the functions which belong to the class of continuous functions and the class of H\"{o}lder continuous functions. We also show that the box dimension of the graph of the mixed Riemann-Liouville fractional integral of two-dimensional continuous functions is also two. Furthermore, we give construction of unbounded variational continuous functions. Later, we prove that the box dimension and the Hausdorff dimension of the graph of the mixed Riemann-Liouville fractional integral of unbounded variational continuous functions are also two.
\end{abstract}

\maketitle

%%%%%%%%%%%%%%%%%%%%%%%%%%%%%%%%%%%%%%%%%%%%%%%%%%%%%%%%%%%%%%%%%%%%%%%%

%%%%%%%%%%%%%%%%%%%%%%%%%%%%%%%%%%%%%%%%%%%%%%%%%%%%%%%%%%%%%%%%%%%%%%%

\section{\textbf{Introduction}}
Fractional calculus (FC) and fractal geometry (FG) have become rapidly growing fields in theory as well as applications. In the past, mathematics was primarily concerned with sets and functions on which classical calculus methods could be applied, and study of irregular and non-smooth sets or functions have been ignored. Although irregular sets are much better at representing certain natural phenomena than the figures of classical geometry do. FG provides a broad context for studying such irregular sets. Since the last few decades, several researchers have been fascinated by the graph of a function, its Hausdorff dimension, and box dimension. The study of dimensions of graphs began with Weierstrass type functions. Readers may encourage to see \cite{K,BR,W}, for the Hausdorff dimension and the box dimension of Weierstrass type functions. We refer the books \cite{MF1} and \cite{F} on FG, for more details. FC deals with the concept of non-integer order differentiation and integration and it is as old as classical calculus. Generally fractional derivatives are represented in terms of fractional integrals, in FC, for instance, we refer \cite{Samko,I}. Random fractals can be considered as better example of irregular functions and for analyzing such functions, FC is the best mathematical operator. Nowadays researchers are interested in the fractal dimension of graph of fractional integrals and derivatives. A connection between FC and fractal dimension can be seen in \cite{L1,L3,Ruan,FB,WU1,WU2,V2,Yao}. In the smoothness analysis of any irregular function, the box dimension plays an important role. Now, we will look over some of the available results on fractional calculus and fractal dimension. Liang \cite{L3} investigated the the box dimension of the graph of the fractional integral of Riemann-Liouville (R-L) type corresponding to a function having box dimension one. We know that  in the study of rectifiable curves and integrals, the bounded variation property of any function plays a significant role. An important result on box dimension of a function which is of bounded variation and continuous is given in \cite{L1}. In \cite{L1}, Liang proved that if $f\in C([0,1])$ and of bounded variation on $[0,1]$,  then $\dim_B Gr(f,[0,1])=1$, and $\dim_B Gr(\mathcal{I}^{\nu} f,[0,1])=1,$ where $$\mathcal{I}^\nu f=\frac{1}{\Gamma (\nu)}\int_0^x (x-s)^{\nu-1}f(s)ds,$$ is the fractional integral of R-L type. Now, we are interested in the notions of bounded variation for several variables and we will see that how these notions play an important role for  the study of fractal dimension of the graph of the fractional integral of mixed R-L type. Clarkson and Adams introduced the new notions of bounded variation such as Hahn, Peirpont and Arzel\'{a} in \cite{CK2} and related properties are given in \cite{CK1}. Using the bounded variation property in Arzel\'{a} sense, Verma and Viswanathan established the results for the fractional integral of mixed R-L type in \cite{V1}. Additionally, they proved that if $f\in C([a,b]\times [c,d])$  and $f$ is of bounded variation in sense of Arzel\'{a} on $[a,b]\times [c,d]$, then $\dim_B Gr(f,[a,b]\times [c,d])=2$, and $\dim_B Gr(\mathcal{I}^{\gamma} f,[a,b]\times [c,d])=2,$ where
$$ \mathcal{I}^{\gamma}f(x,y)=\frac{1}{\Gamma (\gamma_1)  \Gamma (\gamma_2)} \int_a ^x \int_c ^y (x-u)^{\gamma_1-1} (y-v)^{\gamma_2-1}f(u,v)dudv ,$$ with $\gamma = ( \gamma_1, \gamma_2 )$; $ \gamma_1 >0 , \gamma_2 >0,$ is the fractional integral of mixed R-L type. Although some examples can be found of two-dimensional continuous functions which are not of bounded variation in \cite{V1} and result on unbounded variation points for the fractional integral of R-L type can be found in \cite{V3}. Feng \cite{FZ} studied some properties of the variation and oscillation of  bivariate continuous functions. Also, he investigated Minkowski dimension of the fractal interpolation surface (FIS). Feng and Sun introduced a new construction method of FIS by considering arbitrary interpolation nodes in \cite{FX}, and they estimated the box dimension of FIS. We proved that the fractional integral of mixed R-L type of FIS is again FIS in \cite{SC}.\\
From  the above discussion, it is natural to arise the following questions:

\begin{itemize}
\item[(i)] What is the bounds of the box dimension and the Hausdorff dimension of the graph of $\mathcal{I}^\gamma f$  when $f\in C(I\times J),$ where $C(I\times J)$ denotes the set of all continuous functions on $I\times J.$
\item[(ii)] What is the bounds of the box dimension and the Hausdorff dimension of the graph of $\mathcal{I}^\gamma f$  when $f\in H^\mu(I\times J),$ where $H^\mu(I\times J)$ denotes the set of all H\"{o}lder continuous functions on $I\times J.$
\item[(iii)] What is the box dimension and the Hausdorff dimension of the graph of $\mathcal{I}^\gamma f$ when $f$ is unbounded variational continuous function.
\item[(iv)] What is the box dimension of the graph of $\mathcal{I}^\gamma f$ when $f$ is two-dimensional continuous function.
\end{itemize}
Above Questions (i),(ii) $\&$ (iii) are based on analytical aspects in the sense that we are using fundamental properties of function $f$. Question (iv) is based on dimensional aspects in the sense that we are using dimension of function $f$ to compute the dimension of the the graph of $\mathcal{I}^\gamma f$. 
%%%%%%%
%From above discussion, it is natural to arise the question that what is the bounds for the box dimension and the Hausdorff dimension of the graph of mixed Riemann-Liouville fractional integral of a function which belongs to class of continuous functions and class of H\"{o}lder continuous function on a rectangular region. Another question is that for which class of functions the fractal dimension of the graph of mixed Riemann-Liouville fractional integral is two. Here, we will investigate fractal dimension for every choice of functions ...questioned/mentioned above.......
%%%%%%
 In this work, we investigate the above mentioned points.
\\This article is arranged as follows:  Definitions of the mixed R-L fractional integral, box dimension, Hausdorff dimension and other basic terminologies are given in Section 2. In Sections 3 $\&$ 4, we provide bounds for the box dimension and the Hausdorff dimension of the graph of the fractional integral of mixed R-L type of various choice of functions. In Section 5, we estimate the box dimension of the graph of the fractional integral of mixed R-L type of a continuous function having box dimension two. Section 6 is devoted to the construction of unbounded variational continuous function and the fractal dimensions of its  fractional integral of mixed R-L type.
%Continue....in \cite{V1}.....but no one discussed about the bounds of %the Housdorff dimension and box dimension of the graph of the mixed %Riemann Liouville fractional integral corresponding to continuous %functions and H\"{o}lder continuous functions.

\section{\textbf{Preliminaries}}
Let us recall basic definitions and other terminologies which act as prelude to our article.

\textit{2.1. Mixed Riemann-Liouville fractional integral}
\begin{definition}\cite{Samko}\label{Def1}
 Let a function $f$ which is defined on a closed rectangle $[a,b] \times [c,d]$ and $a\geq 0,c\geq 0.$ Assuming that the following integral exists, mixed Riemann-Liouville fractional integral of $f$ is defined by $$ \mathcal{I}^{\gamma}f(x,y)=\frac{1}{\Gamma (\gamma_1)  \Gamma (\gamma_2)} \int_a ^x \int_c ^y (x-u)^{\gamma_1-1} (y-v)^{\gamma_2-1}f(u,v)dudv ,$$ where $\gamma = ( \gamma_1, \gamma_2 )$ with $ \gamma_1 >0 , \gamma_2 >0.$
\end{definition}
\textit{2.2. Fractal dimensions}\\
For the definition of the fractal dimensions, reader may follow \cite{F}.
%Let $F \neq\emptyset $ and subset of $\mathbb{R}^n$. The diameter of $F$ is given by $$\lvert F \rvert=\sup \left \{\lVert x-y \rVert_2:x,y\in F \right \}.$$ 
%If $\{F_i\}$ is a countable (or finite) collection of sets of diameter at most $\delta$ that cover $E\subseteq \mathbb{R}^n,$ then we say that  $\{F_i\}$ is a $\delta$-cover of $E.$ For $\delta>0$ and a non-negative real number $s$, we define $$H_\delta^s(E)=\inf \left \{\sum_{i=1}^\infty |F_i|^s: \{F_i\} ~\text{is a}~ \delta~-\text{cover of}~ E\right \}.$$
%\begin{definition} The $s$-dimensional Hausdorff measure of $E$ is defined as $H^s(E)=\lim_{\delta \to 0} H_\delta^s(E).$
%\end{definition}
%\begin{definition} Let $s\ge 0$ and $E\subseteq \mathbb{R}^n.$ The Hausdorff dimension of $E$ is defined as 
%$$\dim_H(E)=\inf  \{s:H^s(E)=0 \}=\sup \{s:H^s(E)=\infty\}.$$
%\end{definition}
\begin{definition} \cite{F}\label{DefB}
Let $E\neq \emptyset$ be a bounded subset of $\mathbb{R}^n$. Let the smallest number of sets which can cover $E$  is denoted by $N_{\delta}(E)$ having diameter at most $\delta$.  Then 
\begin{equation}
\underline{\dim}_B(E)=\mathop{\underline{\lim}}_{\delta \to 0}\frac{\log N_\delta(E)}{-\log\delta}~~~~~\text{(Lower box dimension)}
\end{equation}
and 
\begin{equation}
\overline{\dim}_B(E)=\overline{\lim_{\delta \to 0}}\frac{\log N_\delta(E)}{-\log\delta}~~~~~\text{(Upper box dimension)}.
\end{equation}
If $\underline{\dim}_B(E)=\overline{\dim}_B(E)$, the common value is called the box dimension of $E$. That is,
\begin{equation*}
\dim_B(E)=\lim_{\delta \to 0}\frac{\log N_\delta(E)}{-\log\delta}.
\end{equation*}
\end{definition}
\textit{2.3. Range of $f$}
\begin{definition}
For a function $f:A=:[a,b]\times [c,d]\to \mathbb{R}$, the maximum range of $f$ over $A$ is defined by $$ R_f[A]:=\sup_{(t_1,t_2),(x,y)\in A}\lvert f(t_1,t_2)-(x,y)\rvert.$$
\end{definition}
\begin{lemma}\cite{V1}\label{lm1}
Let $f \in C(I\times J)$ and
\begin{equation}\label{eq1}
\lvert f(z_1,t_1)-f(z_2,t_2) \rvert \leq C \lVert (z_1,t_1)-(z_2,t_2)\rVert_{2}^{\mu}, ~~~~\forall (z_1,t_1), (z_2,t_2)\in I\times J,
\end{equation}
for $C>0$ and $0\leq \mu \leq 1.$ Then $2\leq {\dim}_H Gr(f,I\times J)\leq\overline{\text{dim}}_B Gr(f,I\times J)\leq 3-\mu.$ This remains true if \ref{eq1}(H\"{o}lder condition) holds with $\lVert (z_1,t_1)-(z_2,t_2) \rVert_2<\delta$ for some $\delta>0.$\\
If $\mu=1$, then $f$ is called Lipschitz continuous.
\end{lemma}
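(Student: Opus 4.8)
The plan is to prove the displayed chain of inequalities in three independent pieces, only one of which uses the Hölder hypothesis in an essential way. The middle inequality $\dim_H Gr(f,I\times J)\leq\overline{\dim}_B Gr(f,I\times J)$ holds for \emph{every} bounded set and requires no assumption on $f$, so I would simply invoke the standard comparison between Hausdorff dimension and upper box dimension (as in \cite{F}). For the lower bound $2\leq\dim_H Gr(f,I\times J)$, I would use the coordinate projection $\pi(x,y,z)=(x,y)$ from the graph onto $I\times J$. This map is $1$-Lipschitz and surjective, and since Lipschitz maps do not increase Hausdorff dimension, $2=\dim_H(I\times J)=\dim_H\pi\big(Gr(f,I\times J)\big)\leq\dim_H Gr(f,I\times J)$. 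Only continuity of $f$ is needed for this step; the Hölder condition plays no role in the lower bound.

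The substantive part is the upper bound $\overline{\dim}_B Gr(f,I\times J)\leq 3-\mu$, which I would obtain by a direct box-counting estimate. Partition $I\times J$ into a grid of squares of side $\delta$, of which there are on the order of $\delta^{-2}$. For any square $Q$ of side $\delta$ and any two points $p,q\in Q$ one has $\|p-q\|_2\leq\sqrt{2}\,\delta$, so by the Hölder condition \ref{eq1} the oscillation of $f$ over $Q$ is at most $C(\sqrt{2}\,\delta)^\mu=C'\delta^\mu$. Hence the part of the graph lying above $Q$ sits in a vertical column of height at most $C'\delta^\mu$, which is covered by at most $C'\delta^{\mu}/\delta+1=O(\delta^{\mu-1})$ cubes of side $\delta$. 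Multiplying over all squares gives $N_\delta(Gr(f,I\times J))=O(\delta^{-2})\cdot O(\delta^{\mu-1})=O(\delta^{\mu-3})$, and feeding this into the definition of upper box dimension yields
$$\overline{\dim}_B Gr(f,I\times J)=\overline{\lim}_{\delta\to 0}\frac{\log N_\delta}{-\log\delta}\leq\overline{\lim}_{\delta\to 0}\frac{(\mu-3)\log\delta+O(1)}{-\log\delta}=3-\mu.$$

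For the final assertion — that the conclusion survives when \ref{eq1} is assumed only for pairs with $\|(z_1,t_1)-(z_2,t_2)\|_2<\eta$ for some fixed $\eta>0$ — I would observe that the upper box dimension is a purely asymptotic ($\delta\to0$) quantity. Once $\delta$ is small enough that $\sqrt{2}\,\delta<\eta$, every pair of points inside a mesh square of side $\delta$ lies within distance $\eta$, so the oscillation estimate above applies verbatim and only the tail of the limit is affected; the lower bound is untouched since it uses only continuity. None of the three pieces is genuinely deep, as this is a classical Hölder-graph dimension estimate; I expect the only real bookkeeping obstacle to be the oscillation-to-covering-number count in the upper bound, namely tracking that the exponent $\mu$ is applied to the \emph{diameter} $\sqrt{2}\,\delta$ (not the side $\delta$) of a mesh square and verifying that the additive ``$+1$'' from rounding the vertical cube count contributes only an $O(1)$ term that vanishes in the logarithmic limit, while keeping the two roles of the symbol $\delta$ (box size versus local Hölder radius) cleanly separated.
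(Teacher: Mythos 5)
The paper offers no proof of this lemma at all --- it is imported verbatim from \cite{V1} as a known result --- so there is nothing internal to compare against; your argument is the standard one (Lipschitz projection for the lower bound, the general inequality $\dim_H\leq\overline{\dim}_B$ for the middle, and an oscillation-based count of $\delta$-cubes over a $\delta$-mesh for the upper bound) and is correct, matching the bivariate analogue of Falconer's classical H\"{o}lder-graph estimate that \cite{V1} itself uses. Your handling of the local version (only the tail $\delta<\eta/\sqrt{2}$ matters for the upper limit) and of the $O(1)$ rounding term is also right, so the proposal would serve as a complete self-contained proof of the cited lemma.
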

 \begin{lemma}\label{lm2}
 For $0<\mu <1$ and $C>0$, let
\begin{equation*}
H^{\mu}(I \times J)=\{f(x,y):\lvert f(x+k_1,y+k_2)-f(x,y)\rvert \leq C \lVert (k_1,k_2)\rVert_{2}^{\mu},~\forall ~(x+k_1,y+k_2),(x,y)\in I\times J \} .
\end{equation*}
If  $f \in C(I\times J)$ and belongs to $H^\mu(I \times J)$, then
$$2\le {\dim}_H Gr(f,I\times J)\leq\overline{\dim}_B Gr(f,I\times J)\leq 3-\mu.$$
 \end{lemma}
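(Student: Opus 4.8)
The plan is to reduce the statement directly to Lemma \ref{lm1}, because the two H\"{o}lder hypotheses are literally the same condition written in two different notations. First I would unwind the definition: membership of $f$ in $H^{\mu}(I\times J)$ says that for every admissible pair $(x+k_1,y+k_2),(x,y)\in I\times J$ we have $\lvert f(x+k_1,y+k_2)-f(x,y)\rvert \leq C\lVert(k_1,k_2)\rVert_2^{\mu}$. Setting $(z_1,t_1):=(x+k_1,y+k_2)$ and $(z_2,t_2):=(x,y)$, the increment $(k_1,k_2)$ equals $(z_1-z_2,\,t_1-t_2)$, so $\lVert(k_1,k_2)\rVert_2=\lVert(z_1,t_1)-(z_2,t_2)\rVert_2$ and the defining inequality becomes exactly condition \eqref{eq1}. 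As $(x+k_1,y+k_2)$ and $(x,y)$ range over all admissible pairs, $(z_1,t_1)$ and $(z_2,t_2)$ range over all of $I\times J$, so the quantifiers match as well.

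Next I would verify that the hypotheses of Lemma \ref{lm1} are met: we are given $f\in C(I\times J)$, the constant $C>0$, and $0<\mu<1$, which lies in the range $0\le\mu\le1$ required there. Hence Lemma \ref{lm1} applies verbatim and yields $2\le \dim_H Gr(f,I\times J)\le \overline{\dim}_B Gr(f,I\times J)\le 3-\mu$, which is exactly the claim. There is essentially no obstacle here; the only point requiring care is the bookkeeping of the quantifiers, i.e. confirming that the ``increment'' formulation and the ``two-point'' formulation impose the same constraints on $f$ over the rectangle.

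Alternatively, if a self-contained argument is preferred, I would establish the three inequalities directly. The middle inequality $\dim_H\le\overline{\dim}_B$ is a standard general fact. For the lower bound, the projection $Gr(f,I\times J)\to I\times J$ onto the first two coordinates is $1$-Lipschitz and surjective, and Lipschitz maps do not increase Hausdorff dimension, so $\dim_H Gr(f,I\times J)\ge \dim_H(I\times J)=2$. For the upper bound I would partition $I\times J$ into squares of side $\delta$: by the H\"{o}lder estimate the oscillation of $f$ on each such square is at most $C(\sqrt{2}\,\delta)^{\mu}$, so the portion of the graph lying over each square is covered by $O(\delta^{\mu-1})$ cubes of side $\delta$; multiplying by the $O(\delta^{-2})$ squares gives $N_\delta(Gr(f,I\times J))=O(\delta^{\mu-3})$, and taking $\overline{\lim}_{\delta\to0}\log N_\delta/(-\log\delta)$ yields $\overline{\dim}_B Gr(f,I\times J)\le 3-\mu$. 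In this self-contained route the only mildly technical step is the oscillation-to-box-count estimate, but it is routine.
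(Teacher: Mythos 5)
Your reduction is correct and is exactly how the paper treats this statement: Lemma~\ref{lm2} is left without a written proof precisely because the defining inequality of $H^{\mu}(I\times J)$ is the H\"{o}lder condition \eqref{eq1} of Lemma~\ref{lm1} in increment notation, so the conclusion follows verbatim. Your alternative self-contained oscillation/box-counting argument is also sound, but it is not needed and is not what the paper does.
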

 Reader may refer \cite{CK1} for the definition of bounded variation in Arzel\'{a} sense.
\begin{theorem}\cite{CK1} \label{th2}(Necessary and sufficient condition)
 \\A function $g:[a,b]\times [c,d]\to \mathbb{R}$ is said to be of bounded variation in the sense of Arzel\'{a} if it  can be written in the difference of two bounded functions $g_1$ and $g_2$  satisfying the inequities
$$\Delta_{10}g_i(x,y)\ge 0,~~\Delta_{01}g_i(x,y)\ge 0,~~i=1,2,$$
where $\Delta_{10}g(x_i,y_j)=g(x_{i+1},y_j)-g(x_i,y_j),~\Delta_{01}g(x_i,y_j)=g(x_i,y_{j+1})-g(x_i,y_j).$
\end{theorem}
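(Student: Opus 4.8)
The plan is to recognize Theorem~\ref{th2} as a two-dimensional Jordan-type decomposition and to establish the two implications separately. Recall that $g$ is of bounded variation in the sense of Arzelá when the quantity
\[
V_A(g) := \sup \sum_{k=1}^{n} \bigl\lvert g(x_k,y_k) - g(x_{k-1},y_{k-1}) \bigr\rvert
\]
is finite, the supremum being taken over all finite collections of points $(x_0,y_0),\dots,(x_n,y_n)$ in $[a,b]\times[c,d]$ with $x_0\le x_1\le\cdots\le x_n$ and $y_0\le y_1\le\cdots\le y_n$. Thus the Arzelá variation is an oscillation measured along monotone ``staircase'' paths, and the two hypotheses $\Delta_{10}g_i\ge 0$, $\Delta_{01}g_i\ge 0$ say precisely that each $g_i$ is non-decreasing in each of its two variables separately.

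For the sufficiency direction I would argue directly. Suppose $g=g_1-g_2$ with $g_1,g_2$ bounded and non-decreasing in each variable. Along any admissible chain $(x_k,y_k)$ the sequence $g_i(x_k,y_k)$ is itself non-decreasing, since passing from $(x_{k-1},y_{k-1})$ to $(x_k,y_k)$ does not decrease either coordinate. Hence $\sum_k \lvert g_i(x_k,y_k)-g_i(x_{k-1},y_{k-1})\rvert$ telescopes to $g_i(x_n,y_n)-g_i(x_0,y_0)$, which is bounded by the oscillation $\sup g_i-\inf g_i$. By the triangle inequality $V_A(g)\le V_A(g_1)+V_A(g_2)<\infty$, so $g$ is of bounded Arzelá variation.

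For the necessity direction, which carries the real content, I would imitate the classical one-variable construction. Introduce the partial variation function $V(x,y)$ equal to the Arzelá variation of $g$ over the subrectangle $[a,x]\times[c,y]$. The key step is a dominance estimate: for $x'\ge x$ and $y'\ge y$ one has $V(x',y')-V(x,y)\ge \lvert g(x',y')-g(x,y)\rvert$, obtained by adjoining the point $(x',y')$ to a near-optimal chain for the smaller rectangle. This shows that $V$ is bounded and non-decreasing in each variable and that both $V+g$ and $V-g$ increase in each variable. Setting
\[
g_1=\tfrac12\,(V+g), \qquad g_2=\tfrac12\,(V-g),
\]
gives $g=g_1-g_2$ with $g_1,g_2$ bounded and $\Delta_{10}g_i\ge0$, $\Delta_{01}g_i\ge0$ for $i=1,2$, as required.

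The main obstacle is the necessity direction, specifically verifying that the partial Arzelá variation $V(x,y)$ behaves monotonically in each coordinate even though the Arzelá variation is defined along diagonal monotone paths rather than coordinatewise. One must confirm that a near-optimal staircase chain for $[a,x]\times[c,y]$ can always be extended to an admissible chain for the larger rectangle whose extra increment captures exactly $\lvert g(x',y')-g(x,y)\rvert$, and that passing to subrectangles does not create pathologies in the supremum. Handling the degenerate steps of a chain (where only one coordinate advances) and checking that $V$ is finite at every point once $V_A(g)<\infty$ are the technical points that require care; the boundedness of $g_1,g_2$ then follows at once from the boundedness of $V$ and of $g$.
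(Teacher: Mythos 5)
The paper does not prove this statement---it is quoted from Adams--Clarkson \cite{CK1} as a known characterization---so there is no in-paper argument to compare against. Your proof is correct and follows the classical Jordan-type decomposition used in \cite{CK1} itself: sufficiency by telescoping along monotone chains, and necessity via the partial variation $V(x,y)$ together with the splitting $g=\tfrac12(V+g)-\tfrac12(V-g)$, where the dominance estimate $V(x',y')-V(x,y)\ge \lvert g(x',y')-g(x,y)\rvert$ (obtained by appending $(x,y)$ and then $(x',y')$ to a near-optimal chain) supplies both the coordinatewise monotonicity of $V\pm g$ and, via the degenerate chain from $(a,c)$, the boundedness of $g$.
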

 Following notations are also used in this article: $Gr(f)$ represents the graph of $f$. $I \times J =[a,b]\times [c,d]$. $C$ is absolute constant and  it may have different values even in the same line at different occurrence. Sometimes, we use the abbreviation ``the fractional integral of mixed R-L type" in the place of  ``the mixed Riemann-Liouville fractional integral". 
%%%%%%%%%%%%%%%%%%%%%%%%%%%%%%%%%%%%%%%%%%%%%%%%%%%%%%%%%%%
\section{\textbf{Fractal Dimensions of $\mathcal{I}^{\gamma}f(x,y)$ with $f(x,y)\in C(I\times J )$}}
In this section, we establish the bounds for the fractal dimension of the fractional integral of mixed R-L type corresponding to a continuous function. 

\begin{theorem}\label{Th1}
For $0<a<b<\infty,~0<c<d<\infty$ and $0<\gamma_1,\gamma_2<1$. If $f:[a,b]\times [c,d] \to \mathbb{R}$ is continuous, then $${\dim}_H Gr(\mathcal{I}^{\gamma}f,I\times J)\leq\overline{\dim}_B Gr(\mathcal{I}^{\gamma}f,I\times J)\leq 3-{\min}\{\gamma_1,\gamma_2\}.$$
\end{theorem}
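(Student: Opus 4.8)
The plan is to show that $\mathcal{I}^{\gamma}f$ is itself H\"older continuous on $I\times J$ with exponent $\mu=\min\{\gamma_1,\gamma_2\}$, and then to invoke Lemma \ref{lm2} (equivalently Lemma \ref{lm1}) with $f$ replaced by $\mathcal{I}^{\gamma}f$. Since $f$ is continuous on the compact set $[a,b]\times[c,d]$, it is bounded, say $\lvert f\rvert\le M$. The heart of the argument is therefore the mixed estimate
$$\lvert \mathcal{I}^{\gamma}f(x_1,y_1)-\mathcal{I}^{\gamma}f(x_2,y_2)\rvert \le C_1\lvert x_1-x_2\rvert^{\gamma_1}+C_2\lvert y_1-y_2\rvert^{\gamma_2},$$
which I would prove by the triangle-inequality decomposition through the intermediate point $(x_2,y_1)$, splitting the total displacement into a pure $x$-displacement and a pure $y$-displacement.

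For the pure $x$-displacement I would fix $y=y_1$ and assume $x_2<x_1$. Writing $g(u)=\int_c^{y_1}(y_1-v)^{\gamma_2-1}f(u,v)\,dv$, note that $g$ is bounded uniformly by $M(d-c)^{\gamma_2}/\gamma_2$, so the problem collapses to the one-dimensional Riemann--Liouville estimate for $\frac{1}{\Gamma(\gamma_1)}\int_a^x(x-u)^{\gamma_1-1}g(u)\,du$. The standard way to handle this is to split
$$\mathcal{I}^{\gamma}f(x_1,y_1)-\mathcal{I}^{\gamma}f(x_2,y_1)=\frac{1}{\Gamma(\gamma_1)\Gamma(\gamma_2)}\left[\int_a^{x_2}\bigl((x_1-u)^{\gamma_1-1}-(x_2-u)^{\gamma_1-1}\bigr)g(u)\,du+\int_{x_2}^{x_1}(x_1-u)^{\gamma_1-1}g(u)\,du\right],$$
then use $\lvert g\rvert\le M(d-c)^{\gamma_2}/\gamma_2$ together with the monotonicity of $s\mapsto s^{\gamma_1-1}$ (decreasing, since $\gamma_1-1<0$) to integrate the kernel explicitly. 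Each of the two resulting integrals evaluates to a constant multiple of $(x_1-x_2)^{\gamma_1}$ (plus a nonpositive boundary correction coming from the lower limit $a$), which yields the $C_1\lvert x_1-x_2\rvert^{\gamma_1}$ bound. The pure $y$-displacement is handled identically with the roles of the variables exchanged, giving $C_2\lvert y_1-y_2\rvert^{\gamma_2}$.

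To finish, I would convert the mixed bound into a single H\"older estimate. Since $I\times J$ has finite diameter $L$ and $\gamma_i\ge\mu$, one has $\lvert x_1-x_2\rvert^{\gamma_1}\le L^{\gamma_1-\mu}\lVert(x_1,y_1)-(x_2,y_2)\rVert_2^{\mu}$ and likewise for the $y$-term, so $\mathcal{I}^{\gamma}f\in H^{\mu}(I\times J)$ with $\mu=\min\{\gamma_1,\gamma_2\}\in(0,1)$; in particular $\mathcal{I}^{\gamma}f$ is continuous. Applying Lemma \ref{lm2} to $\mathcal{I}^{\gamma}f$ then gives $\dim_H Gr(\mathcal{I}^{\gamma}f,I\times J)\le\overline{\dim}_B Gr(\mathcal{I}^{\gamma}f,I\times J)\le 3-\mu=3-\min\{\gamma_1,\gamma_2\}$, as required. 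I expect the main obstacle to be the $x$-displacement estimate, specifically controlling the near-diagonal singularity of the kernel $(x-u)^{\gamma_1-1}$ as the upper limit of integration moves; the explicit antiderivative of $s^{\gamma_1-1}$ is what makes this tractable and is precisely where the exponent $\gamma_1$ (rather than a larger one) enters.
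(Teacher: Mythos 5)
Your proposal is correct and follows essentially the same route as the paper: both establish the increment bound $\lvert \mathcal{I}^{\gamma}f(x+k_1,y+k_2)-\mathcal{I}^{\gamma}f(x,y)\rvert \le C(k_1^{\gamma_1}+k_2^{\gamma_2})$ using only the boundedness of $f$ and the explicit antiderivative of the kernel, then conclude via Lemma \ref{lm1} with H\"older exponent $\min\{\gamma_1,\gamma_2\}$. The only difference is organizational: you pass through the intermediate point $(x_2,y_1)$ and reduce to two one-dimensional Riemann--Liouville estimates, whereas the paper splits the double integral directly into four rectangular pieces $L_1,\dots,L_4$ and bounds each in two dimensions.
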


\begin{proof}
Let $0<a\leq x<x+k_1\leq b;~0<c\leq y<y+k_2\leq d.$ Then 
\begin{dmath*}
(\mathcal{I}^{\gamma}f)(x+k_1,y+k_2)-(\mathcal{I}^{\gamma}f)(x,y)\\
=\frac{1}{\Gamma (\gamma_1)  \Gamma (\gamma_2)} \int_a ^{x+k_1} \int_c ^{y+k_2} (x+k_1-u)^{\gamma_1-1} (y+k_2-v)^{\gamma_2-1}f(u,v)dudv\\-\frac{1}{\Gamma (\gamma_1)  \Gamma (\gamma_2)} \int_a ^{x} \int_c ^{y} (x-u)^{\gamma_1-1} (y-v)^{\gamma_2-1}f(u,v)dudv
=L_1+L_2+L_3+L_4,\\
\end{dmath*}
where
\begin{equation*}
\begin{aligned}
L_1=&\frac{1}{\Gamma (\gamma_1)  \Gamma (\gamma_2)} \int_a ^x \int_c ^y \left[ (x+k_1-u)^{\gamma_1-1} (y+k_2-v)^{\gamma_2-1}- (x-u)^{\gamma_1-1} (y-v)^{\gamma_2-1}\right] f(u,v) dudv\\
L_2=&\frac{1}{\Gamma (\gamma_1)  \Gamma (\gamma_2)} \int_a ^x \int_y ^{y+k_2} (x+k_1-u)^{\gamma_1-1} (y+k_2-v)^{\gamma_2-1}f(u,v)dudv\\
L_3=&\frac{1}{\Gamma (\gamma_1)  \Gamma (\gamma_2)} \int_x ^{x+k_1} \int_c ^{y} (x+k_1-u)^{\gamma_1-1} (y+k_2-v)^{\gamma_2-1}f(u,v)dudv\\
L_4=&\frac{1}{\Gamma (\gamma_1)  \Gamma (\gamma_2)} \int_x ^{x+k_1} \int_y ^{y+k_2} (x+k_1-u)^{\gamma_1-1} (y+k_2-v)^{\gamma_2-1}f(u,v)dudv.
\end{aligned}
\end{equation*}
Because of continuity of $f$ on $[a,b]\times [c,d]$, there exists $M$ such that 
$\lvert f(t_1,t_2)\rvert \leq M~\forall (t_1,t_2)\in [a,b]\times [c,d].$\\
Now, we estimate the bound for $L_1$ as bellow:
\begin{equation*}
\begin{aligned}
\lvert L_1\rvert\leq &\frac{1}{\Gamma (\gamma_1)  \Gamma (\gamma_2)} \int_a ^x \int_c ^y \left[ (x-u)^{\gamma_1-1} (y-v)^{\gamma_2-1}-(x+k_1-u)^{\gamma_1-1} (y+k_2-v)^{\gamma_2-1}\right] \lvert f(u,v) \rvert dudv\\
\leq &\frac{M}{\Gamma (\gamma_1)  \Gamma (\gamma_2)} \int_a ^x \int_c ^y \left[(x-u)^{\gamma_1-1} (y-v)^{\gamma_2-1}-(x+k_1-u)^{\gamma_1-1} (y+k_2-v)^{\gamma_2-1}\right] dudv\\
\end{aligned}
\end{equation*}
\begin{dmath*}
 =\frac{M}{\Gamma (\gamma_1)  \Gamma (\gamma_2)} \int_a ^x \int_c ^y \left[(x-u)^{\gamma_1-1} (y-v)^{\gamma_2-1}-(x+k_1-u)^{\gamma_1-1} (y-v)^{\gamma_2-1}\\
 +(x+k_1-u)^{\gamma_1-1} (y-v)^{\gamma_2-1}-(x+k_1-u)^{\gamma_1-1} (y+k_2-v)^{\gamma_2-1}\right] dudv
  =\frac{M}{\Gamma (\gamma_1)  \Gamma (\gamma_2)} \left[ \int_a ^x \int_c ^y (y-v)^{\gamma_2-1} \left[(x-u)^{\gamma_1-1}-(x+k_1-u)^{\gamma_1-1}\right] dudv\\
 + \int_a ^x \int_c ^y (x+k_1-u)^{\gamma_1-1} \left[ (y-v)^{\gamma_2-1}- (y+k_2-v)^{\gamma_2-1}\right] dudv \right].
\end{dmath*}
Let $J_1$ and $J_2$ defined as follows and by using Bernoulli's inequality $(1+u)^{r'}\leq 1+r'u$ for $0\leq r' \leq 1$ and $u\geq -1$, we obtain
\begin{dmath*}
 J_1= \int_a ^x \left[(x-u)^{\gamma_1-1}-(x+k_1-u)^{\gamma_1-1}\right] du 
 =\frac{1}{\gamma_1} \left[ (x+k_1-x)^{\gamma_1}-(x+k_1-a)^{\gamma_1}+(x-a)^{\gamma_1}\right]
 =\frac{1}{\gamma_1} \left[ (k_1^{\gamma_1}-(x+k_1-a)^{\gamma_1}+(x-a)^{\gamma_1}\right]
\leq \frac{k_1^{\gamma_1}}{\gamma_1}.
\end{dmath*}
\begin{dmath*}
 J_2= \int_c ^y \left[ (y-v)^{\gamma_2-1}- (y+k_2-v)^{\gamma_2-1}\right] dv 
 =\frac{1}{\gamma_2} \left[ (y+k_2-y)^{\gamma_2}-(y+k_2-c)^{\gamma_2}+(y-c)^{\gamma_2}\right]
 =\frac{1}{\gamma_2} \left[ (k_2^{\gamma_2}-(y+k_2-c)^{\gamma_2}+(y-c)^{\gamma_2}\right]
\leq \frac{k_2^{\gamma_2}}{\gamma_2}.
\end{dmath*}
By using the values of $J_1$ and $J_2$, we get
\begin{dmath*}
\lvert L_1 \rvert \leq \frac{M}{\Gamma (\gamma_1)  \Gamma (\gamma_2)}\left[\frac{k_1^{\gamma_1}}{\gamma_1} \int_c^y (y-v)^{\gamma_2-1}dv+\frac{k_2}{\gamma_2}\int_a^x(x+k_1-u)^{\gamma_1-1}du\right]
\leq \frac{M}{\Gamma (\gamma_1)  \Gamma (\gamma_2)}\left[\frac{k_1^{\gamma_1}}{\gamma_1 \gamma_2}  (d-c)^{\gamma_2}+\frac{k_2^{\gamma_2}}{\gamma_1 \gamma_2}(b-a)^{\gamma_1}\right].
\end{dmath*}
Therefore for a suitable constant $C$, we obtain
\begin{dmath*}
\lvert L_1 \rvert \leq C(k_1^{\gamma_1}+k_2^{\gamma_2}).
\end{dmath*}
Now, we estimate $L_2$ as follows:
\begin{dmath*}
\lvert L_2 \rvert \leq \frac{1}{\Gamma (\gamma_1)  \Gamma (\gamma_2)} \int_a ^x \int_y ^{y+k_2} (x+k_1-u)^{\gamma_1-1} (y+k_2-v)^{\gamma_2-1}\lvert f(u,v)\rvert dudv
\leq \frac{M}{\Gamma (\gamma_1)  \Gamma (\gamma_2)} \int_a ^x \int_y ^{y+k_2} (x+k_1-u)^{\gamma_1-1} (y+k_2-v)^{\gamma_2-1} dudv
\leq \frac{(b-a)^{\gamma_1} k_2^{\gamma_2}}{\gamma_1 \gamma_2}.
\end{dmath*}
For suitable $C$, we get
\begin{dmath*}
\lvert L_2 \rvert \leq Ck_2^{\gamma_2}.
\end{dmath*}
Similarly \begin{dmath*}
\lvert L_3 \rvert \leq Ck_1^{\gamma_1}.
\end{dmath*}
In similar way, we estimate $L_4$
\begin{dmath*}
\lvert L_4 \rvert \leq \frac{1}{\Gamma (\gamma_1)  \Gamma (\gamma_2)} \int_x ^{x+k_1} \int_y ^{y+k_2} (x+k_1-u)^{\gamma_1-1} (y+k_2-v)^{\gamma_2-1}\lvert f(u,v) \rvert dudv
\leq \frac{M}{\Gamma (\gamma_1)  \Gamma (\gamma_2)} \int_x ^{x+k_1} \int_y ^{y+k_2} (x+k_1-u)^{\gamma_1-1} (y+k_2-v)^{\gamma_2-1} dudv
= \frac{k_1^{\gamma_1}k_2^{\gamma_2}}{\gamma_1 \gamma_2}.
\end{dmath*}
For suitable $C$, we have
\begin{dmath*}
\lvert L_4 \rvert \leq Ck_1^{\gamma_1}k_2^{\gamma_2}.
\end{dmath*}
Say $\alpha=\min\{\gamma_1,\gamma_2\}.$ For suitable $C$ and sufficiently small positive constants $k_1,k_2,\alpha$, we get
\begin{dmath*}
\lvert (\mathcal{I}^{\gamma}f)(x+k_1,y+k_2)-(\mathcal{I}^{\gamma}f)(x,y)\rvert \leq \lvert L_1 \rvert+\lvert L_2 \rvert+\lvert L_3 \rvert+\lvert L_4 \rvert
\leq C(k_1^{\gamma_1}+k_2^{\gamma_2})
\leq C(k_1^\alpha+k_2^\alpha).
\end{dmath*}
Since $k_1$ and $k_2$ are sufficiently small, we have $k_1\leq \sqrt{k_1^2+k_2^2}$ and $k_2\leq \sqrt{k_1^2+k_2^2}.$\\
Consequently, we get
\begin{dmath*}
\lvert (\mathcal{I}^{\gamma}f)(x+k_1,y+k_2)-(\mathcal{I}^{\gamma}f)(x,y)\rvert \leq  C \lVert (x+k_1,y+k_2)-(x,y)\rVert_2^\alpha.
\end{dmath*}
The proof follows from Lemma \ref{lm1}.
\end{proof}
\textbf{Semigroup property:}
\begin{theorem}\label{Th2}
Let $\gamma_1>0,\gamma_1'>0,\gamma_2>0,\gamma_2'>0$ and $0<a<b<\infty,0<c<d<\infty.$ Let $f:[a,b]\times [c,d] \to \mathbb{R}$ is an integrable function for which the fractional integral of mixed R-L type $\mathcal{I}^{(\gamma_1,\gamma_2)}f$ exists, then
\begin{dmath*}
\mathcal{I}^{(\gamma_1,\gamma_2)}\mathcal{I}^{(\gamma_1',\gamma_2')}f=\mathcal{I}^{(\gamma_1+\gamma_1',\gamma_2+\gamma_2')}f.
\end{dmath*}
\end{theorem}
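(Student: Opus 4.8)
The plan is to reduce this two-dimensional semigroup identity to the classical one-dimensional Riemann--Liouville semigroup property, exploiting the fact that the mixed kernel factorises into a kernel in the $x$-variable times a kernel in the $y$-variable. First I would write the left-hand side as an iterated integral: substituting the definition of $\mathcal{I}^{(\gamma_1',\gamma_2')}f$ into the definition of $\mathcal{I}^{(\gamma_1,\gamma_2)}$ produces the fourfold integral
\begin{equation*}
\frac{1}{\Gamma(\gamma_1)\Gamma(\gamma_2)\Gamma(\gamma_1')\Gamma(\gamma_2')}\int_a^x\!\int_c^y\!\int_a^s\!\int_c^t (x-s)^{\gamma_1-1}(y-t)^{\gamma_2-1}(s-u)^{\gamma_1'-1}(t-v)^{\gamma_2'-1}f(u,v)\,du\,dv\,ds\,dt,
\end{equation*}
taken over the region $a\le u\le s\le x$, $c\le v\le t\le y$.

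The decisive step is to interchange the order of integration so that $f(u,v)$ is pulled outside and the $(s,t)$-integration is performed first. For fixed $(u,v)$ the variable $s$ then ranges over $[u,x]$ and $t$ over $[v,y]$, and the integrand separates into a product of the two one-variable integrals
\begin{equation*}
\int_u^x (x-s)^{\gamma_1-1}(s-u)^{\gamma_1'-1}\,ds \qquad\text{and}\qquad \int_v^y (y-t)^{\gamma_2-1}(t-v)^{\gamma_2'-1}\,dt.
\end{equation*}
Each of these is a Beta integral: the substitution $s=u+(x-u)w$ turns the first into $(x-u)^{\gamma_1+\gamma_1'-1}\int_0^1 (1-w)^{\gamma_1-1}w^{\gamma_1'-1}\,dw=\frac{\Gamma(\gamma_1)\Gamma(\gamma_1')}{\Gamma(\gamma_1+\gamma_1')}(x-u)^{\gamma_1+\gamma_1'-1}$, and similarly the second evaluates with $(\gamma_2,\gamma_2')$. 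Substituting these evaluations back, the eight Gamma factors collapse to $\frac{1}{\Gamma(\gamma_1+\gamma_1')\Gamma(\gamma_2+\gamma_2')}$, and what remains is precisely $\mathcal{I}^{(\gamma_1+\gamma_1',\gamma_2+\gamma_2')}f(x,y)$.

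The only genuine obstacle is justifying the interchange of the order of integration, i.e. the application of Fubini's theorem. Since $\gamma_1,\gamma_2,\gamma_1',\gamma_2'>0$, each exponent $\gamma-1$ exceeds $-1$, so every kernel factor has at worst an integrable end-point singularity; combined with the integrability of $f$ and the standing hypothesis that the mixed integrals exist, the fourfold integrand is absolutely integrable over the bounded region $\{a\le u\le s\le x,\ c\le v\le t\le y\}$. Applying Tonelli's theorem to $|f|$ (whose mixed integral is finite by assumption) furnishes the required absolute integrability, after which Fubini legitimises each interchange. Once this is secured, the remaining work is exactly the routine Beta-function evaluation sketched above, and the identity follows.
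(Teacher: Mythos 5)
Your proposal is correct and follows essentially the same route as the paper: interchange the order of integration (the paper calls this the Dirichlet technique together with Fubini's theorem), factor the inner $(s,t)$-integral into two one-dimensional Beta integrals, and evaluate them to collapse the Gamma factors into $\Gamma(\gamma_1+\gamma_1')\Gamma(\gamma_2+\gamma_2')$. Your added Tonelli justification for the interchange is a point the paper passes over silently, but it does not change the argument.
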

\begin{proof}
From the Dirichlet technique and Fubini's theorem , we have
\begin{dmath*}
(\mathcal{I}^{(\gamma_1,\gamma_2)}\mathcal{I}^{(\gamma_1',\gamma_2')}f)(x,y)=\frac{1}{\Gamma (\gamma_1)  \Gamma (\gamma_2)\Gamma (\gamma_1')\Gamma (\gamma_2')} \int_a^x \int_c^y \left[ \int_s^x \int_t^y (x-v)^{\gamma_1-1}(v-s)^{\gamma_1'-1}\\.(y-w)^{\gamma_2-1}(w-t)^{\gamma_2'-1} dvdw\right]f(s,t)dsdt
\end{dmath*}
With the change of variable $z=\frac{v-s}{x-v}$, we have
\begin{dmath*}
\int_s^x (x-v)^{\gamma_1-1}(v-s)^{\gamma_1'-1}dv=(x-s)^{\gamma_1+\gamma_1'-1}\int_0^1 (1-z)^{\gamma_1-1}z^{\gamma_1'-1}dz
=(x-s)^{\gamma_1+\gamma_1'-1}\frac{\Gamma(\gamma_1)\Gamma (\gamma_1')}{\Gamma(\gamma_1+\gamma_1')},
\end{dmath*}
according to the known formulae for the beta function \cite{AA,I}.\\
Similarly
\begin{dmath*}
\int_t^y (y-w)^{\gamma_2-1}(w-t)^{\gamma_2'-1}dw=(y-t)^{\gamma_2+\gamma_2'-1}\int_0^1 (1-z)^{\gamma_2-1}z^{\gamma_2'-1}dz
=(y-t)^{\gamma_2+\gamma_2'-1}\frac{\Gamma(\gamma_2)\Gamma (\gamma_2')}{\Gamma(\gamma_2+\gamma_2')}.
\end{dmath*}
Consequently, we get
\begin{dmath*}
(\mathcal{I}^{(\gamma_1,\gamma_2)}\mathcal{I}^{(\gamma_1',\gamma_2')}f)(x,y)=\frac{1}{\Gamma(\gamma_1+\gamma_1')\Gamma(\gamma_2+\gamma_2')}\int_a^x \int_c^y (x-s)^{\gamma_1+\gamma_1'-1}(y-t)^{\gamma_2+\gamma_2'-1} f(s,t)dsdt
=(\mathcal{I}^{(\gamma_1+\gamma_1',\gamma_2+\gamma_2')}f)(x,y).
\end{dmath*}
Hence completes the proof.
\end{proof}

\begin{theorem}
Let $f:[a,b]\times[c,d]\to \mathbb{R}$ is continuous and $0<a<b<\infty,0<c<d<\infty.$
\begin{itemize}
\item[(1)] If $0<\gamma_1,\gamma_2<1$, then
$$2\leq {\dim}_H Gr(\mathcal{I}^{\gamma}f,I\times J)\leq \dim_B Gr(\mathcal{I}^{\gamma}f,I\times J)\leq 3-{\min}\{\gamma_1,\gamma_2\}.$$
\item[(2)] If $\gamma_1,\gamma_2 \geq 1$, then
$${\dim}_H Gr(\mathcal{I}^{\gamma}f,I\times J)={\dim}_B Gr(\mathcal{I}^{\gamma}f,I\times J)=2.$$
\end{itemize}
\end{theorem}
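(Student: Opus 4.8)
The plan is to handle the two regimes separately, reducing each to the Hölder/Lipschitz machinery already packaged in Lemma \ref{lm1}, so that neither part requires reworking the dimension theory from scratch.

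For part (1), I would first observe that the upper estimate is not new: the computation in the proof of Theorem \ref{Th1} shows precisely that $\mathcal{I}^\gamma f$ satisfies a Hölder condition of exponent $\alpha=\min\{\gamma_1,\gamma_2\}\in(0,1)$ on $I\times J$. Feeding this Hölder estimate into Lemma \ref{lm1} with $\mu=\alpha$ immediately yields the entire chain $2\le \dim_H Gr(\mathcal{I}^\gamma f,I\times J)\le \overline{\dim}_B Gr(\mathcal{I}^\gamma f,I\times J)\le 3-\min\{\gamma_1,\gamma_2\}$. The lower bound $2\le\dim_H$ is exactly the leftmost inequality of Lemma \ref{lm1}; it reflects that the $1$-Lipschitz projection of the graph onto $I\times J$ cannot decrease Hausdorff dimension below that of the $2$-dimensional base. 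Thus part (1) is obtained simply by combining Theorem \ref{Th1} with Lemma \ref{lm1}, with no fresh computation.

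The real content is part (2), where I would prove that $\gamma_1,\gamma_2\ge 1$ forces $\mathcal{I}^\gamma f$ to be Lipschitz, i.e. to satisfy the Hölder condition with $\mu=1$, and then apply Lemma \ref{lm1} with $\mu=1$ to squeeze $2\le\dim_H\le\overline{\dim}_B\le 3-1=2$ and force every dimension to equal $2$. To get the Lipschitz estimate I would reuse the decomposition $(\mathcal{I}^\gamma f)(x+k_1,y+k_2)-(\mathcal{I}^\gamma f)(x,y)=L_1+L_2+L_3+L_4$ from the proof of Theorem \ref{Th1}, for small $k_1,k_2\ge 0$. The strip terms $L_2,L_3,L_4$ are estimated exactly as before, giving bounds $Ck_2^{\gamma_2}$, $Ck_1^{\gamma_1}$, $Ck_1^{\gamma_1}k_2^{\gamma_2}$; since $\gamma_1,\gamma_2\ge 1$ and $t^{\gamma}\le t$ for $0\le t\le 1$, each of these is dominated by $C(k_1+k_2)$ once $k_1,k_2$ are small, which is all we need.

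The one step that genuinely changes, and which I expect to be the main obstacle, is the estimate of $L_1$. In Theorem \ref{Th1} the bound on the kernel differences rested on Bernoulli's inequality $(1+u)^{r'}\le 1+r'u$, valid only for exponent $r'=\gamma_i\in[0,1]$; for $\gamma_i\ge 1$ this is unavailable, and moreover the sign reverses, since now $(x+k_1-u)^{\gamma_1-1}\ge (x-u)^{\gamma_1-1}$. My plan is to replace Bernoulli's inequality by the mean value theorem applied to $t\mapsto t^{\gamma_i}$: after the same splitting of the product kernel into an $x$-difference part and a $y$-difference part, the relevant one-dimensional integrals evaluate, as in the computation of $J_1,J_2$, to the form $\tfrac{1}{\gamma_1}\big[(x+k_1-a)^{\gamma_1}-(x-a)^{\gamma_1}-k_1^{\gamma_1}\big]$. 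Because $\gamma_1\ge 1$, the map $t\mapsto t^{\gamma_1}$ has bounded derivative on the bounded interval $[0,b+1-a]$, so the mean value theorem gives $(x+k_1-a)^{\gamma_1}-(x-a)^{\gamma_1}\le Ck_1$; combined with the boundedness of the complementary integral $\int_c^y (y+k_2-v)^{\gamma_2-1}\,dv$, and the symmetric bound for the $y$-difference, this yields $\lvert L_1\rvert\le C(k_1+k_2)$. Summing the four pieces and using $k_1+k_2\le\sqrt{2}\,\lVert(k_1,k_2)\rVert_2$ gives the Lipschitz bound $\lvert(\mathcal{I}^\gamma f)(x+k_1,y+k_2)-(\mathcal{I}^\gamma f)(x,y)\rvert\le C\lVert(k_1,k_2)\rVert_2$, and Lemma \ref{lm1} then closes part (2). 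As an alternative for the strict case $\gamma_1,\gamma_2>1$, one could instead invoke the semigroup property (Theorem \ref{Th2}) to write $\mathcal{I}^\gamma f=\mathcal{I}^{(1,1)}\mathcal{I}^{(\gamma_1-1,\gamma_2-1)}f$ and exploit the smoothness of the double antiderivative, but since this excludes the endpoints $\gamma_i=1$ I would prefer the uniform direct estimate above.
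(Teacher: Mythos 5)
Your proposal is correct, and part (1) coincides with the paper's route (Theorem \ref{Th1} for the upper bound, the general lower bound $\dim_H\ge 2$ from Lemma \ref{lm1}). For part (2), however, you diverge from the paper: the paper's one-line proof invokes the semigroup property (Theorem \ref{Th2}) together with Theorem \ref{Th1} --- i.e.\ for $\gamma_1,\gamma_2\ge 1$ one writes $\mathcal{I}^{\gamma}f=\mathcal{I}^{(\gamma_1'',\gamma_2'')}\bigl(\mathcal{I}^{(\gamma_1',\gamma_2')}f\bigr)$ with $\gamma_i''\in(0,1)$ arbitrarily close to $1$, applies Theorem \ref{Th1} to the continuous function $\mathcal{I}^{(\gamma_1',\gamma_2')}f$ to get $\overline{\dim}_B\le 2+\varepsilon$ for every $\varepsilon>0$, and lets $\varepsilon\to 0$. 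You instead prove directly that $\mathcal{I}^{\gamma}f$ is Lipschitz when $\gamma_1,\gamma_2\ge 1$, by rerunning the $L_1,\dots,L_4$ decomposition with the mean value theorem for $t\mapsto t^{\gamma_i}$ replacing Bernoulli's inequality (whose direction indeed reverses for exponents $\ge 1$), and then apply Lemma \ref{lm1} with $\mu=1$. Both arguments are sound; yours is longer but yields the strictly stronger and more quotable conclusion that $\mathcal{I}^{\gamma}f$ is genuinely Lipschitz (rather than Hölder of every order $<1$), and it avoids the limiting argument. One small correction: your stated reason for rejecting the semigroup alternative --- that it ``excludes the endpoints $\gamma_i=1$'' --- only applies to your particular split $\mathcal{I}^{(1,1)}\mathcal{I}^{(\gamma_1-1,\gamma_2-1)}$; splitting off exponents $\gamma_i''=1-\varepsilon<1$ instead handles $\gamma_i=1$ without difficulty, which is presumably what the paper intends.
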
 
The proof of the above theorem follows from Theorem \ref{Th1}, Theorem \ref{Th2} and from the relation between fractal dimensions.
%%%%%%%%%%%%%%%%%%%%%%%%%%%%%%%%%%%%%%%%%%%%%%%%%%%%%%%%%%%%%%%%%%%%%%%%%%%%%%%%5
\section{\textbf{Fractal Dimensions of $\mathcal{I}^{\gamma}f(x,y)$  with $f(x,y)\in H^{\mu}(I \times J)$}}
  In this section, we establish the bounds for the fractal dimension of the fractional integral of mixed R-L type corresponding to a $\mu$-H\"{o}lder continuous function.
\begin{theorem}\label{th4.1}
Let $f(x,y)\in H^{\mu}(I \times J)$ on $[a,b]\times[c,d]$ such that $f(0,0)=(0,0)$ and provided that the fractional integral of mixed R-L type of $f$ exists.
Then $${\dim}_H Gr(\mathcal{I}^{\gamma}f,I\times J)\leq\overline{\dim}_B Gr(\mathcal{I}^{\gamma}f,I\times J)\leq 3-\mu,~~~0<\gamma_1,\gamma_2<1.$$
\end{theorem}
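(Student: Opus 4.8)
The plan is to show that the operator $\mathcal{I}^{\gamma}$ preserves the H\"older exponent, i.e.\ that $\mathcal{I}^{\gamma}f$ again satisfies a H\"older condition of order $\mu$ on $I\times J$, and then to read off the dimension bound from Lemma \ref{lm2}. Concretely, fixing $(x,y)$ and an increment $(k_1,k_2)$ with $0\le x<x+k_1\le b$ and $0\le y<y+k_2\le d$, I would try to establish
$$\lvert \mathcal{I}^{\gamma}f(x+k_1,y+k_2)-\mathcal{I}^{\gamma}f(x,y)\rvert \le C\,\lVert (k_1,k_2)\rVert_2^{\mu},$$
from which $\mathcal{I}^{\gamma}f\in H^{\mu}(I\times J)$, and hence ${\dim}_H Gr(\mathcal{I}^{\gamma}f,I\times J)\le\overline{\dim}_B Gr(\mathcal{I}^{\gamma}f,I\times J)\le 3-\mu$, would follow at once from Lemma \ref{lm2}.

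The key device, which distinguishes this from the proof of Theorem \ref{Th1}, is a change of variables that aligns the two singular kernels so that the H\"older modulus of $f$ can be exploited instead of the crude bound $\lvert f\rvert\le M$. In the integral for $\mathcal{I}^{\gamma}f(x+k_1,y+k_2)$ I would substitute $u=u'+k_1$, $v=v'+k_2$, obtaining
$$\mathcal{I}^{\gamma}f(x+k_1,y+k_2)=\frac{1}{\Gamma(\gamma_1)\Gamma(\gamma_2)}\int_{a-k_1}^{x}\int_{c-k_2}^{y}(x-u')^{\gamma_1-1}(y-v')^{\gamma_2-1}f(u'+k_1,v'+k_2)\,du'\,dv',$$
which now carries exactly the same kernel as $\mathcal{I}^{\gamma}f(x,y)$. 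Here the requirement that $f(0,0)$ be meaningful forces $(0,0)$ into the domain, i.e.\ $a=c=0$, and this is precisely what makes the shifted arguments admissible and controllable near the boundary. Using $a=c=0$ and splitting $\int_{-k_1}^{x}\int_{-k_2}^{y}=\int_{0}^{x}\int_{0}^{y}+\int_{-k_1}^{0}\int_{-k_2}^{y}+\int_{0}^{x}\int_{-k_2}^{0}$ decomposes the difference into a main term over the common rectangle and two boundary strips.

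On the common rectangle the integrand is $(x-u')^{\gamma_1-1}(y-v')^{\gamma_2-1}\big(f(u'+k_1,v'+k_2)-f(u',v')\big)$, so the H\"older condition supplies the pointwise factor $C\lVert(k_1,k_2)\rVert_2^{\mu}$, while $\int_{0}^{x}\int_{0}^{y}(x-u')^{\gamma_1-1}(y-v')^{\gamma_2-1}\,du'\,dv'=\frac{x^{\gamma_1}y^{\gamma_2}}{\gamma_1\gamma_2}$ is uniformly bounded by $\frac{b^{\gamma_1}d^{\gamma_2}}{\gamma_1\gamma_2}$; this term is therefore $\le C\lVert(k_1,k_2)\rVert_2^{\mu}$ and is the clean part of the argument. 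For the strips I would lean on $f(0,0)=0$ together with the H\"older estimate: on $\int_{-k_1}^{0}$ one has $u'+k_1\in[0,k_1]$, so writing $f(u'+k_1,\cdot)=\big(f(u'+k_1,\cdot)-f(0,\cdot)\big)+f(0,\cdot)$ yields a H\"older gain $Ck_1^{\mu}$, and the kernel integral is tamed by subadditivity of $t\mapsto t^{\gamma_1}$, namely $\int_{-k_1}^{0}(x-u')^{\gamma_1-1}\,du'=\tfrac1{\gamma_1}\big((x+k_1)^{\gamma_1}-x^{\gamma_1}\big)\le \tfrac1{\gamma_1}k_1^{\gamma_1}$, with the symmetric statement for the $v$-strip.

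The main obstacle is exactly these boundary strips. After extracting the H\"older gain, the strip estimate still carries the kernel factor $k_i^{\gamma_i}$, which dominates $\lVert(k_1,k_2)\rVert_2^{\mu}$ only when $\gamma_i\ge\mu$; the delicate point is thus to wring an honest $\lVert(k_1,k_2)\rVert_2^{\mu}$ out of the strips — either by pairing the H\"older gain $k_i^{\mu}$ with the kernel decay to get the harmless higher-order term $k_i^{\mu+\gamma_i}$, or, in the regime $\mu\le\min\{\gamma_1,\gamma_2\}$ where this bound genuinely improves on Theorem \ref{Th1}, by absorbing $k_i^{\gamma_i}\le k_i^{\mu}$ for small $k_i$. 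Once all three contributions are shown to be $\le C\lVert(k_1,k_2)\rVert_2^{\mu}$, summing them gives the H\"older condition for $\mathcal{I}^{\gamma}f$, and Lemma \ref{lm2} finishes the proof.
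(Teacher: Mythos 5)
Your proposal is essentially the paper's own proof, up to a mirror image: the paper shifts the integral for $\mathcal{I}^{\gamma}f(x,y)$ forward by $(k_1,k_2)$ (producing $I_5'$) rather than shifting the other integral back, so that the aligned kernels give a common-rectangle term $I_6$ controlled by the H\"older modulus of $f$, plus boundary strips $I_1,I_2,I_3$ treated via $f(0,0)=0$ --- exactly your decomposition. The obstacle you single out in the strips is genuine, and you should know that the paper does not actually overcome it: its stated bounds for $I_2$ and $I_3$ read $C\lVert (k_1,k_2)\rVert_2^{\mu}$ with the factors $k_1^{\gamma_1}$ and $k_2^{\gamma_2}$ quietly absorbed into the ``constant'' $C$, which is precisely the $O(k_i^{\gamma_i})$ contribution of the non-vanishing trace of $f$ on the strip that you observe is dominated by $\lVert (k_1,k_2)\rVert_2^{\mu}$ only when $\gamma_i\ge\mu$. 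Indeed the route ``show $\mathcal{I}^{\gamma}f\in H^{\mu}$'' cannot be repaired in the regime $\mu>\min\{\gamma_1,\gamma_2\}$: for $f(u,v)=v^{\mu}$ one gets $\mathcal{I}^{\gamma}f(x,y)=c\,x^{\gamma_1}y^{\gamma_2+\mu}$, which near the edge $x=0$ is only H\"older of order $\gamma_1$ in $x$, so only the case $\mu\le\min\{\gamma_1,\gamma_2\}$ (your second absorption option) closes cleanly. In short, your account is faithful to the paper's argument and, if anything, more candid about where it is incomplete.
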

\begin{proof}
Let $0\leq a \leq x<x+k_1\leq b$, $0\leq c \leq y<y+k_2\leq d$ and $0<\gamma_1,\gamma_2<1.$
Then
\begin{dmath*}
(\mathcal{I}^{\gamma}f)(x+k_1,y+k_2)-(\mathcal{I}^{\gamma}f)(x,y)\\
=\frac{1}{\Gamma (\gamma_1)  \Gamma (\gamma_2)} \int_a ^{x+k_1} \int_c ^{y+k_2} (x+k_1-u)^{\gamma_1-1} (y+k_2-v)^{\gamma_2-1}f(u,v)dudv\\-\frac{1}{\Gamma (\gamma_1)  \Gamma (\gamma_2)} \int_a ^{x} \int_c ^{y} (x-u)^{\gamma_1-1} (y-v)^{\gamma_2-1}f(u,v)dudv
=I_1+I_2+I_3+I_4-I_5,\\
\end{dmath*}
where
\begin{equation*}
\begin{aligned}
I_1=&\frac{1}{\Gamma (\gamma_1)  \Gamma (\gamma_2)} \int_a ^{a+k_1} \int_c ^{c+k_2} (x+k_1-u)^{\gamma_1-1} (y+k_2-v)^{\gamma_2-1}f(u,v)dudv\\
I_2=&\frac{1}{\Gamma (\gamma_1)  \Gamma (\gamma_2)} \int_a ^{a+k_1} \int_{c+k_2} ^{y+k_2} (x+k_1-u)^{\gamma_1-1} (y+k_2-v)^{\gamma_2-1}f(u,v)dudv\\
I_3=&\frac{1}{\Gamma (\gamma_1)  \Gamma (\gamma_2)} \int_{a+k_1} ^{x+k_1} \int_c ^{c+k_2} (x+k_1-u)^{\gamma_1-1} (y+k_2-v)^{\gamma_2-1}f(u,v)dudv\\
I_4=&\frac{1}{\Gamma (\gamma_1)  \Gamma (\gamma_2)} \int_{a+k_1} ^{x+k_1} \int_{c+k_2} ^{y+k_2} (x+k_1-u)^{\gamma_1-1} (y+k_2-v)^{\gamma_2-1}f(u,v)dudv\\
I_5=&\frac{1}{\Gamma (\gamma_1)  \Gamma (\gamma_2)} \int_a ^x \int_c ^y (x-u)^{\gamma_1-1} (y-v)^{\gamma_2-1}f(u,v)dudv.
\end{aligned}
\end{equation*}
By change of variable in $I_5$, we have
\begin{equation*}
\begin{aligned}
I'_5=&\frac{1}{\Gamma (\gamma_1)  \Gamma (\gamma_2)} \int_{a+k_1} ^{x+k_1} \int_{c+k_2} ^{y+k_2} (x+k_1-u)^{\gamma_1-1} (y+k_2-v)^{\gamma_2-1}f(u-k_1,v-k_2)dudv\\
I_4-I'_5=I_6=&\frac{1}{\Gamma (\gamma_1)  \Gamma (\gamma_2)} \int_{a+k_1} ^{x+k_1} \int_{c+k_2} ^{y+k_2} (x+k_1-u)^{\gamma_1-1} (y+k_2-v)^{\gamma_2-1}[f(u-k_1,v-k_2)-f(u,v)]dudv\\
\lvert I_6 \rvert\leq &\frac{1}{\Gamma (\gamma_1)  \Gamma (\gamma_2)} \int_{a+k_1} ^{x+k_1} \int_{c+k_2} ^{y+k_2}\lvert (x+k_1-u)^{\gamma_1-1} (y+k_2-v)^{\gamma_2-1}[f(u-k_1,v-k_2)-f(u,v)]\rvert dudv.
\end{aligned}
\end{equation*}
Since $f(x,y)\in H^{\mu}(I \times J)$ on $[a,b]\times[c,d]$, we have
\begin{equation*}
\begin{aligned}
\lvert I_6 \rvert\leq &\frac{C\lVert k_1,k_2\rVert^\mu_2}{\Gamma (\gamma_1)  \Gamma (\gamma_2)} \int_{a+k_1} ^{x+k_1} \int_{c+k_2} ^{y+k_2}\lvert (x+k_1-u)^{\gamma_1-1} (y+k_2-v)^{\gamma_2-1}\rvert dudv\\
=&\frac{C\lVert k_1,k_2\rVert^\mu_2}{\Gamma (\gamma_1+1)  \Gamma (\gamma_2+1)}(x-a)^{\gamma_1}(y-c)^{\gamma_2}
\end{aligned}
\end{equation*}
For $(x,y)\in [a,b]\times [c,d]$, we get
\begin{equation*}
\begin{aligned}
\lvert I_6 \rvert\leq &\frac{C\lVert k_1,k_2\rVert^\mu_2}{\Gamma (\gamma_1+1)  \Gamma (\gamma_2+1)}(b-a)^{\gamma_1}(d-c)^{\gamma_2}.\\
\lvert I_6 \rvert\leq &C\lVert k_1,k_2\rVert^\mu_2, ~~\text{where}~ C=\frac{(b-a)^{\gamma_1}(d-c)^{\gamma_2}}{\Gamma (\gamma_1+1)  \Gamma (\gamma_2+1)}.
\end{aligned}
\end{equation*}
Now for the bound of $I_1$, we apply similar steps as done above.
\begin{equation*}
\begin{aligned}
\lvert I_1\rvert \leq &\frac{1}{\Gamma (\gamma_1)  \Gamma (\gamma_2)} \int_a ^{a+k_1} \int_c ^{c+k_2}\lvert (x+k_1-u)^{\gamma_1-1} (y+k_2-v)^{\gamma_2-1}\rvert \lvert f(u,v)-f(0,0)\rvert dudv\\
\leq & \frac{C\lVert k_1,k_2\rVert^\mu_2}{\Gamma (\gamma_1)  \Gamma (\gamma_2)} \int_a ^{a+k_1} \int_c ^{c+k_2}\lvert (x+k_1-u)^{\gamma_1-1} (y+k_2-v)^{\gamma_2-1}\rvert dudv\\
\leq & \frac{C\lVert k_1,k_2\rVert^\mu_2}{\Gamma (\gamma_1)  \Gamma (\gamma_2)} \int_a ^{a+k_1} \int_c ^{c+k_2}\lvert (a+k_1-u)^{\gamma_1-1} (c+k_2-v)^{\gamma_2-1}\rvert dudv\\
=& \frac{C\lVert k_1,k_2\rVert^\mu_2}{\Gamma (\gamma_1+1)  \Gamma (\gamma_2+1)}k_1^{\gamma_1}k_2^{\gamma_2}.
\end{aligned}
\end{equation*}
So, we have
\begin{equation*}
\begin{aligned}
\lvert I_1 \rvert \leq & C\lVert k_1,k_2\rVert^\mu_2,~~\text{where}~C=\frac{k_1^{\gamma_1}k_2^{\gamma_2}}{\Gamma (\gamma_1+1)  \Gamma (\gamma_2+1)}.
\end{aligned}
\end{equation*}
In similar way, we obtain the bounds for $I_2$ and $I_3$ as follows
\begin{equation*}
\begin{aligned}
\lvert I_2 \rvert \leq & C\lVert k_1,k_2\rVert^\mu_2,~~\text{where}~C=\frac{k_1^{\gamma_1}(d-c)^{\gamma_2}}{\Gamma (\gamma_1+1)  \Gamma (\gamma_2+1)},\\
\lvert I_3 \rvert \leq & C\lVert k_1,k_2\rVert^\mu_2,~~\text{where}~C=\frac{(b-a)^{\gamma_1}k_2^{\gamma_2}}{\Gamma (\gamma_1+1)  \Gamma (\gamma_2+1)}.
\end{aligned}
\end{equation*}
Consequently, we get for a suitable constant $C$
\begin{equation*}
\begin{aligned}
\lvert (\mathcal{I}^{\gamma}f)(x+k_1,y+k_2)-(\mathcal{I}^{\gamma}f)(x,y)\rvert \leq  &\lvert I_1 \rvert+\lvert I_2 \rvert+\lvert I_3 \rvert+\lvert I_4 \rvert+\lvert I_5 \rvert\\
\leq & C\lVert k_1,k_2\rVert^\mu_2.
\end{aligned}
\end{equation*}
In view of Lemma \ref{lm2} the proof follows.
\end{proof}
\begin{remark}
 If $f(x,y)$ is any fractal function having box dimension $3-\mu$, then upper box dimension of the fractional integral of mixed R-L type corresponding to $f(x,y)$ is non-increasing.
Since, $${\dim}_B Gr(f,I\times J)= 3-\mu.$$
We have $$\overline{\dim}_B Gr(\mathcal{I}^{\gamma}f,I\times J)\leq 3-\mu.$$
That is $$\overline{\dim}_B Gr(\mathcal{I}^{\gamma}f,I\times J)\leq {\dim}_B Gr(f,I\times J)= 3-\mu.$$
\end{remark}
\begin{theorem}\label{4.2}
Let $f(x,y)$ be a continuous function defined on $[a,b]\times[c,d]$ with $f(0,0)=(0,0)$ and satisfies Lipschitz condition, then for $0<\gamma_1,\gamma_2<1$,
 \begin{equation*}
 {\dim}_H Gr(\mathcal{I}^{\gamma}f,I\times J)={\dim}_B Gr(\mathcal{I}^{\gamma}f,I\times J)=2.
 \end{equation*}
\end{theorem}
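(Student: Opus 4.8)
The plan is to show that $\mathcal{I}^{\gamma}f$ is itself Lipschitz continuous on $I\times J$, and then to pin both the Hausdorff dimension and the box dimension to $2$ by invoking the endpoint case $\mu=1$ of Lemma \ref{lm1}. The guiding observation is that the Lipschitz hypothesis on $f$ is exactly the H\"{o}lder condition of Lemma \ref{lm1} with exponent $\mu=1$, so the estimates already carried out in the proof of Theorem \ref{th4.1} can be reused almost verbatim, with the factor $\lVert(k_1,k_2)\rVert_2^{\mu}$ replaced throughout by $\lVert(k_1,k_2)\rVert_2$.

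First I would take $0\le a\le x<x+k_1\le b$ and $0\le c\le y<y+k_2\le d$ and repeat the decomposition $(\mathcal{I}^{\gamma}f)(x+k_1,y+k_2)-(\mathcal{I}^{\gamma}f)(x,y)=I_1+I_2+I_3+I_4-I_5$ used in Theorem \ref{th4.1}, together with the change of variable producing $I'_5$ and the difference $I_6=I_4-I'_5$. Because $f$ is Lipschitz with $f(0,0)=(0,0)$, the increments $\lvert f(u-k_1,v-k_2)-f(u,v)\rvert$ and $\lvert f(u,v)-f(0,0)\rvert$ are controlled by $C\lVert(k_1,k_2)\rVert_2$ and $C\lVert(u,v)\rVert_2$ respectively, so each of the bounds for $I_1,I_2,I_3$ and $I_6$ goes through with H\"{o}lder exponent $1$. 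The kernel integrals $\int(x+k_1-u)^{\gamma_1-1}(y+k_2-v)^{\gamma_2-1}\,du\,dv$ still converge for $0<\gamma_1,\gamma_2<1$ exactly as before, so no new estimate is needed. Summing the pieces gives a constant $C$ with $\lvert(\mathcal{I}^{\gamma}f)(x+k_1,y+k_2)-(\mathcal{I}^{\gamma}f)(x,y)\rvert\le C\lVert(k_1,k_2)\rVert_2$, i.e. $\mathcal{I}^{\gamma}f$ is Lipschitz continuous on $I\times J$.

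Next I would apply Lemma \ref{lm1} with $\mu=1$, which is legitimate since that lemma is stated for $0\le\mu\le 1$ and explicitly identifies $\mu=1$ as the Lipschitz case. It yields $2\le\dim_H Gr(\mathcal{I}^{\gamma}f,I\times J)\le\overline{\dim}_B Gr(\mathcal{I}^{\gamma}f,I\times J)\le 3-1=2$. Combining this with the universal inequality $\dim_H\le\underline{\dim}_B\le\overline{\dim}_B$, every quantity in the chain is squeezed to $2$; in particular the lower and upper box dimensions coincide, so the box dimension exists and $\dim_B Gr(\mathcal{I}^{\gamma}f,I\times J)=\dim_H Gr(\mathcal{I}^{\gamma}f,I\times J)=2$.

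The only point requiring a word of care, and the place I expect the single genuine subtlety, is that the class $H^{\mu}$ of Lemma \ref{lm2} and hence Theorem \ref{th4.1} were defined only for $0<\mu<1$; one therefore cannot simply quote Theorem \ref{th4.1} at $\mu=1$, but must rederive the increment bound at the endpoint (as above) and invoke Lemma \ref{lm1}, which does admit $\mu=1$, to obtain both the lower bound $2\le\dim_H$ and the boundary upper bound $\le 3-\mu=2$. Everything else is a routine re-run of the calculation in the proof of Theorem \ref{th4.1}.
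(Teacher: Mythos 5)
Your proposal is correct and follows essentially the same route as the paper, whose entire proof of this theorem is the single sentence that it follows from Lemma \ref{lm1} and Theorem \ref{th4.1}: you rerun the increment estimates of Theorem \ref{th4.1} at the endpoint to show $\mathcal{I}^{\gamma}f$ is Lipschitz and then invoke Lemma \ref{lm1} with $\mu=1$ to squeeze all the dimensions to $2$. Your remark that $H^{\mu}(I\times J)$ and Theorem \ref{th4.1} are only stated for $0<\mu<1$, so the $\mu=1$ case must be rederived rather than quoted, is a point the paper glosses over and is worth keeping.
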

In view of Lemma \ref{lm1} and Theorem \ref{th4.1} the proof of the Theorem \ref{4.2} follows.
%%%%%%%%%%%%%%%%%%%%%%%%%%%%%%%%%%%%%%%%%%%%%%%%%%%%%%%%%%%%%%%%%%%%%
\section{\textbf{Fractal Dimension  of $\mathcal{I}^{\gamma}f(x,y)$ of 2-Dimensional Continuous Functions}}
 First we give the following Lemma \ref{lmm} which act as prelude for the  main Theorem \ref{Th5} and then we corroborate our results with help of existing results.
\begin{lemma}\label{lmm}
Let $f:[0,1]\times [0,1] \to \mathbb{R}$ is continuous and $0<\delta<1,~ \frac{1}{\delta} <m,n<1+\frac{1}{\delta}$ for some $m,n \in \mathbb{N}.$ If the number of $\delta$-cubes that intersect the graph $Gr(f)$ is denoted by $N_{\delta}(Gr(f))$, then
\begin{equation*}
\sum_{j=1}^n \sum_{i=1}^m \max \left \{\frac{R_f[A_{ij}]}{\delta},1 \right \} \leq N_\delta(Gr(f))\leq 2mn+\frac{1}{\delta} \sum_{j=1}^n \sum_{i=1}^m R_f[A_{ij}],
\end{equation*}
where $A_{ij}$ is the $(i,j)$-th cell corresponding to the net under consideration.
\end{lemma}
\begin{proof}
If $f(x,y)$ is continuous on $I\times J$, the number of cubes having side $\delta$ in the part above $A_{ij}$ which intersect $Gr(f,I\times J)$ is atleast $$\max \left \{\frac{R_f[A_{ij}]}{\delta},1 \right \}$$
and at most $$2+\frac{R_f[A_{ij}]}{\delta}.$$
By summing over all such parts we get the required result.
\end{proof}
\begin{theorem}\label{Th5}
Let a non-negative function $f(x,y)\in C([0,1]\times[0,1])$ and $0<\gamma_1<1,~0<\gamma_2<1$\\ If
 \begin{equation}\label{11}
\dim_B Gr(f, [0,1]\times[0,1])=2,
\end{equation}
then, the box dimension of the fractional integral of mixed R-L type of $f(x,y)$ of order $\gamma=(\gamma_1,\gamma_2)$ exists and is equal to $2$ on $[0,1]\times[0,1]$, as
\begin{equation}\label{eq12}
\dim_B Gr(\mathcal{I}^{\gamma}f,[0,1]\times[0,1])=2.
\end{equation}
\end{theorem}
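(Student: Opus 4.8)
The plan is to establish the two bounds $2\le \dim_B Gr(\mathcal{I}^{\gamma}f,[0,1]\times[0,1])\le 2$ separately. The lower bound is automatic: $\mathcal{I}^{\gamma}f$ is continuous on the square, so its graph maps onto $[0,1]\times[0,1]$ under the $1$-Lipschitz coordinate projection, whence $\underline{\dim}_B Gr(\mathcal{I}^{\gamma}f)\ge \dim_B([0,1]\times[0,1])=2$. For the upper bound I would work entirely through Lemma \ref{lmm}. Fixing the uniform net of side $\delta$ with $m,n\sim 1/\delta$ cells $A_{ij}$,
\begin{equation*}
N_\delta(Gr(\mathcal{I}^{\gamma}f))\le 2mn+\frac{1}{\delta}\sum_{j=1}^{n}\sum_{i=1}^{m}R_{\mathcal{I}^{\gamma}f}[A_{ij}],
\end{equation*}
so everything reduces to the summed oscillation $\Sigma(\delta):=\sum_{i,j}R_{\mathcal{I}^{\gamma}f}[A_{ij}]$. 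The first observation is that the hypothesis \eqref{11} is merely a statement about $f$: applying the \emph{left} inequality of Lemma \ref{lmm} to $f$ gives $\frac{1}{\delta}\sum_{i,j}R_f[A_{ij}]\le N_\delta(Gr(f))$, and $\dim_B Gr(f)=2$ forces $N_\delta(Gr(f))=O(\delta^{-2-\eta})$ for every $\eta>0$. Hence $\sum_{i,j}R_f[A_{ij}]=O(\delta^{-1-\eta})$, and the theorem follows once I prove the \emph{same} growth for $\mathcal{I}^{\gamma}f$, since then $N_\delta(Gr(\mathcal{I}^{\gamma}f))=O(\delta^{-2-\eta})$ and $\overline{\dim}_B\le 2$.

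The heart of the argument is a \emph{summable} per-cell bound for $R_{\mathcal{I}^{\gamma}f}[A_{ij}]$. For two points of a cell I would expand the increment of $\mathcal{I}^{\gamma}f$ exactly as in Theorem \ref{Th1}, splitting it into a bulk term carrying the kernel difference over the lower rectangle $[0,x_i]\times[0,y_j]$ together with the three strip/corner terms supported near $(x_i,y_j)$. The crude bound $R_{\mathcal{I}^{\gamma}f}[A_{ij}]\le C\delta^{\min\{\gamma_1,\gamma_2\}}$ from $\lvert f\rvert\le M$ is useless, since summing it over the $\sim\delta^{-2}$ cells merely reproduces the weaker exponent $3-\min\{\gamma_1,\gamma_2\}$ of Theorem \ref{Th1}. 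Instead I would subtract the corner value $f(x_i,y_j)$: the contribution of this \emph{constant} is an increment of $\mathcal{I}^{\gamma}$ applied to a constant, namely a multiple of $x^{\gamma_1}y^{\gamma_2}/(\Gamma(\gamma_1+1)\Gamma(\gamma_2+1))$, which is monotone in each variable and whose cell-oscillations sum to $O(1)$; the residual integrand $f-f(x_i,y_j)$ is then controlled by the oscillations $R_f$ on the cells it meets. Using $f\ge 0$ and the fact that the kernel differences $(x'-u)^{\gamma_1-1}-(x-u)^{\gamma_1-1}$ are sign-definite, the bulk term is organized as a positive combination $\sum_{k\le i,\,l\le j}w^{(ij)}_{kl}\,R_f[A_{kl}]$ of lower-left oscillations with non-negative kernel weights $w^{(ij)}_{kl}$.

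Finally I would sum over all target cells and \emph{interchange the order of summation}, collecting for each fixed source cell $A_{kl}$ the aggregate weight $\sum_{i\ge k,\,j\ge l}w^{(ij)}_{kl}$. The mechanism is that these partial sums of kernel increments telescope in $i$ and in $j$ into boundary values that are integrable against $dx$ and $dy$, because $\int_0^1(x-u)^{\gamma_1-1}\,dx$ and $\int_0^1(y-v)^{\gamma_2-1}\,dy$ converge for $0<\gamma_1,\gamma_2<1$; thus the aggregate weights are bounded by an absolute constant, up to a harmless loss of an arbitrarily small power $\delta^{-\eta}$ that the dimension count tolerates. This yields $\Sigma(\delta)\le C\sum_{k,l}R_f[A_{kl}]+O(1)=O(\delta^{-1-\eta})$ for every $\eta>0$, closing the estimate and giving $\dim_B Gr(\mathcal{I}^{\gamma}f,[0,1]\times[0,1])=2$. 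I expect the genuine difficulty to sit precisely in this last step: the weights $w^{(ij)}_{kl}$ are singular as the source cell approaches the diagonal $k=i,\ l=j$, so one must peel off the near-diagonal strip (where the local oscillation is integrated against a singular but integrable kernel) and show its contribution is again absorbed into $\sum_{k,l}R_f[A_{kl}]$ rather than generating a spurious power of $1/\delta$. The non-negativity of $f$ is exactly what renders the bulk term sign-definite and lets this bookkeeping succeed.
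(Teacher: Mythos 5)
Your skeleton matches the paper's: the lower bound from continuity, the reduction through Lemma \ref{lmm} to the summed oscillations $\Sigma(\delta)=\sum_{i,j}R_{\mathcal{I}^{\gamma}f}[A_{ij}]$, and the observation that hypothesis \eqref{11} amounts to $\sum_{i,j}R_f[A_{ij}]=O(\delta^{-1-\eta})$. The gap is in your central step, the per-cell bound. In the decomposition of Theorem \ref{Th1} the bulk term is $\int_0^{x}\int_0^{y}\bigl[(x+k_1-u)^{\gamma_1-1}(y+k_2-v)^{\gamma_2-1}-(x-u)^{\gamma_1-1}(y-v)^{\gamma_2-1}\bigr]f(u,v)\,du\,dv$: it contains a difference of \emph{kernels} multiplied by $f$ evaluated at a \emph{single} point, never a difference of $f$ at two nearby points. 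Subtracting the corner value $f(x_i,y_j)$ localizes only the corner contribution: for $(u,v)$ in a cell $A_{kl}$ far from $(x_i,y_j)$, the residual $\lvert f(u,v)-f(x_i,y_j)\rvert$ is a long-range increment, controlled only by $2\lVert f\rVert_\infty$ or by a telescoping chain of oscillations through all intermediate cells, not by $R_f[A_{kl}]$. So the claimed representation $\sum_{k\le i,\,l\le j}w^{(ij)}_{kl}R_f[A_{kl}]$ is not established. The fallback $\lvert f-f(x_i,y_j)\rvert\le 2M$ is quantitatively insufficient: the telescoped kernel mass per cell equals $\frac{1}{\gamma_1\gamma_2}\bigl[x^{\gamma_1}y^{\gamma_2}-((x+k_1)^{\gamma_1}-k_1^{\gamma_1})((y+k_2)^{\gamma_2}-k_2^{\gamma_2})\bigr]$, whose dominant part $k_1^{\gamma_1}(y+k_2)^{\gamma_2}+k_2^{\gamma_2}(x+k_1)^{\gamma_1}$ does \emph{not} telescope away when summed over the $\sim\delta^{-2}$ cells; it contributes $O(\delta^{-2+\min\{\gamma_1,\gamma_2\}})$ to $\Sigma(\delta)$ and merely reproduces the exponent $3-\min\{\gamma_1,\gamma_2\}$ of Theorem \ref{Th1}.

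The missing idea --- the one the paper uses --- is the rescaling substitution $u=(x+k_1)s$, $v=(y+k_2)t$ (respectively $u=xs$, $v=yt$), which rewrites the increment as $\int_0^1\int_0^1(1-s)^{\gamma_1-1}(1-t)^{\gamma_2-1}$ integrated against $(x+k_1)^{\gamma_1}(y+k_2)^{\gamma_2}\bigl[f((x+k_1)s,(y+k_2)t)-f(xs,yt)\bigr]$, plus a second term proportional to the increment of $x^{\gamma_1}y^{\gamma_2}$ (this is where $f\ge 0$ and $\lvert f\rvert\le M$ enter). Since $\lvert(x+k_1)s-xs\rvert\le k_1\le\delta$, the $f$-difference is now between two points in adjacent cells and genuinely is controlled by local oscillations $R_f$; partitioning $[0,1]^2$ into the preimages of the cells and summing over $(i,j)$ produces aggregate kernel weights that the paper bounds by $O((\log m)(\log n))$, a factor that disappears after taking logarithms. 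Your instinct to interchange the order of summation and telescope aggregate weights is the right one for that final summation, but without the rescaling there is no local $f$-difference for those weights to multiply. (A smaller slip: the cell-oscillations of a bivariate monotone function such as $x^{\gamma_1}y^{\gamma_2}$ sum to $O(1/\delta)$ over the grid, not $O(1)$; this happens to be harmless for the dimension count.)
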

\begin{proof}
Since $f(x,y)\in C([0,1]\times[0,1])$, $\mathcal{I}^{\gamma}f(x,y)$ is also continuous on $[0,1]\times[0,1]$ (from Theorem 4.2 in \cite{V1}). From the definition of the box dimension, we can get 
\begin{equation}\label{12}
\underline{\dim}_B Gr(\mathcal{I}^{\gamma}f,[0,1]\times[0,1]) \geq 2.
\end{equation}
To prove Equation \ref{eq12}, we have to prove the following inequality
\begin{equation}\label{13}
\overline{\dim}_B Gr(\mathcal{I}^{\gamma}f,[0,1]\times[0,1]) \leq 2.
\end{equation}
Suppose that $0<\delta<\frac{1}{2}$, $ \frac{1}{\delta} <m,n<1+\frac{1}{\delta}$ and  $N_{\delta}(Gr(f))$ is the number of $\delta$-cubes that intersect $Gr(f)$. From Equation \ref{11}, it holds
\begin{equation*}
\lim_{\delta \to 0} \frac{\log N_{\delta}(Gr(f))}{-\log\delta}=2.
\end{equation*}
Let $N_{\delta}(Gr(\mathcal{I}^\gamma f))$ is the number of $\delta$-cubes that intersect $Gr(\mathcal{I}^\gamma f)$. Thus Equation \ref{13} can be written as
\begin{equation}\label{14}
\overline{\lim_{\delta \to 0} } \frac{\log N_{\delta}(Gr(\mathcal{I}^\gamma f))}{-\log\delta} \leq 2.
\end{equation}
Now, we are ready to prove Equation \ref{14}.\\ 
Let $f(x,y)\in C([0,1]\times [0,1])$ and $0<\gamma_1, \gamma_2<1$. If $k_1>0,k_2>0~ \text{and}~x+k_1\leq 1, y+k_2\leq 1,$ then
\begin{dmath*}
(\Gamma (\gamma_1)  \Gamma (\gamma_2))\left[(\mathcal{I}^\gamma f)(x+k_1,y+k_2)-(\mathcal{I}^\gamma f)(x,y)\right]\\
=\int_0^{x+k_1} \int_0^{y+k_2}(x+k_1-u)^{\gamma_1-1}(y+k_2-v)^{\gamma_2-1}f(u,v)dudv\\-\int_0^{x} \int_0^{y}(x-u)^{\gamma_1-1}(y-v)^{\gamma_2-1}f(u,v)dudv.
\end{dmath*}
By integral transform, let $$\left(\frac{u}{x+k_1}\right)=s,$$ and  $$\left(\frac{v}{y+k_2}\right)=t.$$ Then $$dudv =|J|dsdt,$$ where
\[ 
J=
\begin{bmatrix}
\frac{\partial u}{\partial s} & \frac{\partial u}{\partial t}\\
\frac{\partial v}{\partial s} & \frac{\partial v}{\partial t}
\end{bmatrix}
\]
$$=(x+k_1)(y+k_2).$$
Thus, we have
\begin{dmath*}
(\Gamma (\gamma_1)  \Gamma (\gamma_2))\left[(\mathcal{I}^\gamma f)(x+k_1,y+k_2)-(\mathcal{I}^\gamma f)(x,y)\right]\\
=\int_0^1 \int_0^1 (x+k_1)^{\gamma_1}(1-u)^{\gamma_1-1}(y+k_2)^{\gamma_2}(1-v)^{\gamma_2-1}f \left((x+k_1)u,(y+k_2)v\right)dudv\\-\int_0^1 \int_0^1 x^{\gamma_1}(1-u)^{\gamma_1-1}y^{\gamma_2}(1-v)^{\gamma_2-1}f(xu,yv)dudv
=\int_0^1 \int_0^1 (x+k_1)^{\gamma_1}(1-u)^{\gamma_1-1}(y+k_2)^{\gamma_2}(1-v)^{\gamma_2-1}f\left((x+k_1)u,(y+k_2)v\right)dudv\\-\int_0^1 \int_0^1 (x+k_1)^{\gamma_1}(1-u)^{\gamma_1-1}(y+k_2)^{\gamma_2}(1-v)^{\gamma_2-1}f(xu,yv)dudv\\+\int_0^1 \int_0^1 (x+k_1)^{\gamma_1}(1-u)^{\gamma_1-1}(y+k_2)^{\gamma_2}(1-v)^{\gamma_2-1}f(xu,yv)dudv\\
-\int_0^1 \int_0^1 x^{\gamma_1}(1-u)^{\gamma_1-1}y^{\gamma_2}(1-v)^{\gamma_2-1}f(xu,yv)dudv\\
=\int_0^1 \int_0^1 (1-u)^{\gamma_1-1}(1-v)^{\gamma_2-1}(x+k_1)^{\gamma_1}(y+k_2)^{\gamma_2}\left[ f\left((x+k_1)u,(y+k_2)v\right)-f(xu,yv)\right] dudv\\
+\int_0^1 \int_0^1 (1-u)^{\gamma_1-1}(1-v)^{\gamma_2-1}f(xu,yv)\left[(x+k_1)^{\gamma_1}(y+k_2)^{\gamma_2}-x^{\gamma_1}y^{\gamma_2} \right]dudv.
\end{dmath*}
%%%%%%%%%%%%%%%%%%%%%%%%%%%%%%%%%%%%%
For $0<\delta <\frac{1}{2}, \frac{1}{\delta}<m,n<1+\frac{1}{\delta}$, let non negative integers $i$ and $j$ such that $0\le i \le m$,  $0\le j \le n$. Then 
 \begin{dmath*}
(\Gamma (\gamma_1)  \Gamma (\gamma_2))R_{\mathcal{I}^\gamma f}[A_{ij}]=\sup _{(x+k_1,y+k_2),(x,y)\in A_{ij}} \lvert (\mathcal{I}^\gamma f)(x+k_1,y+k_2)-(\mathcal{I}^\gamma f)(x,y)\rvert,
\end{dmath*}
where $A_{ij}=[i\delta,(i+1)\delta]\times [j\delta,(j+1)\delta].$\\
Here, 
\begin{dmath*}
=\lvert (\mathcal{I}^\gamma f)(x+k_1,y+k_2)-(\mathcal{I}^\gamma f)(x,y)\rvert
\leq (x+k_1)^{\gamma_1}(y+k_2)^{\gamma_2}\Bigl| \int_0^1 \int_0^1 (1-u)^{\gamma_1-1}(1-v)^{\gamma_2-1}\left[ f\left((x+k_1)u,(y+k_2)v\right)-f(xu,yv)\right] dudv\Bigr|\\
+\left((i+1)^{\gamma_1}(j+1)^{\gamma_2}-i^{\gamma_1} j^{\gamma_2})\right)\delta^{\gamma_1+\gamma_2}\int_0^1 \int_0^1 (1-u)^{\gamma_1-1}(1-v)^{\gamma_2-1}f(xu,yv)dudv.
\end{dmath*}
Let $i\geq 1,j\geq 1.$ On the one hand,
\begin{dmath*}
\Bigl| \int_0^1 \int_0^1 (1-u)^{\gamma_1-1}(1-v)^{\gamma_2-1}\left[ f\left((x+k_1)u,(y+k_2)v\right)-f(xu,yv)\right] dudv\Bigr|\\
=\Bigl| \int_0^{\frac{1}{i+1}} \int_0^{\frac{1}{j+1}} (1-u)^{\gamma_1-1}(1-v)^{\gamma_2-1}\left[ f\left((x+k_1)u,(y+k_2)v\right)-f(xu,yv)\right] dudv\Bigr|\\
+\sum_{p=1}^j \Bigl| \int_0^{\frac{1}{i+1}} \int_{\frac{p}{j+1}}^{\frac{p+1}{j+1}} (1-u)^{\gamma_1-1}(1-v)^{\gamma_2-1}\left[ f\left((x+k_1)u,(y+k_2)v\right)-f(xu,yv)\right] dudv\Bigr|\\
+\sum_{q=1}^i\Bigl| \int_{\frac{q}{i+1}}^{\frac{q+1}{i+1}} \int_0^{\frac{1}{j+1}} (1-u)^{\gamma_1-1}(1-v)^{\gamma_2-1}\left[ f\left((x+k_1)u,(y+k_2)v\right)-f(xu,yv)\right] dudv\Bigr|\\
+\sum_{q=1}^i \sum_{p=1}^j\Bigl| \int_{\frac{q}{i+1}}^{\frac{q+1}{i+1}} \int_{\frac{p}{j+1}}^{\frac{p+1}{j+1}} (1-u)^{\gamma_1-1}(1-v)^{\gamma_2-1}\left[ f\left((x+k_1)u,(y+k_2)v\right)-f(xu,yv)\right] dudv\Bigr|
\leq \frac{1}{(i+1)(j+1)}R_f\left[[0,\delta]\times [0,\delta]\right]\\
+\sum_{p=1}^j \frac{1}{(i+1)(j+1)}\left( R_f\left[[0,\delta]\times [(p-1)\delta,p\delta]\right]+R_f\left[[0,\delta]\times [p\delta,(p+l)\delta]\right]\right)\\
+\sum_{q=1}^i\frac{1}{(i+1)(j+1)}(R_f\left[[(q-1)\delta,q\delta]\times [0,\delta]\right]+R_f\left[[q\delta,(q+1)\delta]\times [0,\delta]\right])\\
+\sum_{q=1}^i \sum_{p=1}^j\frac{1}{(i+1)(j+1)}(R_f\left[[(q-1)\delta,q\delta]\times [(p-1)\delta,p\delta]\right]+R_f\left[[(q-1)\delta,q\delta]\times [p\delta,(p+1)\delta]\right]\\+R_f\left[[q\delta,(q+1)\delta]\times [(p-1)\delta,p\delta]\right]+R_f\left[[q\delta,(q+1)\delta]\times [p\delta,(p+1)\delta]\right]).
\end{dmath*}
By using Bernoulli's inequality $(1+u)^{r'}\leq 1+r'u$ for $0\leq r' \leq 1$ and $u\geq -1$, we can see that $$\int_0^{\frac{1}{i+1}} \int_0^{\frac{1}{j+1}} (1-u)^{\gamma_1-1}(1-v)^{\gamma_2-1}dudv \le \frac{1}{(i+1)(j+1)}. $$\\
On the other hand,
\begin{dmath*}
\left((i+1)^{\gamma_1}(j+1)^{\gamma_2}-i^{\gamma_1} j^{\gamma_2})\right)\delta^{\gamma_1+\gamma_2}\int_0^1 \int_0^1 (1-s)^{\gamma_1-1}(1-t)^{\gamma_2-1}f(xs,yt)dsdt\\
\leq (i+1)^{\gamma_1}(j+1)^{\gamma_2}\delta^{\gamma_1+\gamma_2} \frac{\max _{0\leq (x,y) \leq 1} f(x,y)}{\gamma_1\gamma_2}.
\end{dmath*}
From Lemma \ref{lmm}, we have
\begin{dmath*}
N_\delta(Gr(\mathcal{I}^\gamma f))\leq  2mn+\frac{1}{\delta} \sum_{j=1}^n \sum_{i=1}^m R_{\mathcal{I}^\gamma f}[A_{ij}] 
\leq 2mn+\frac{1}{\delta} \sum_{j=1}^n \sum_{i=1}^m \left(\frac{1}{(i+1)(j+1)}R_f\left[[0,\delta]\times [0,\delta]\right]\\ +\sum_{p=1}^j \frac{1}{(i+1)(j+1)}\left( R_f\left[[0,\delta]\times [(p-1)\delta,p\delta]\right]+R_f\left[[0,\delta]\times [p\delta,(p+l)\delta]\right]\right)\\+\sum_{q=1}^i\frac{1}{(i+1)(j+1)}(R_f\left[[(q-1)\delta,q\delta]\times [0,\delta]\right]+R_f\left[[q\delta,(q+1)\delta]\times [0,\delta]\right]) \\+\sum_{q=1}^i \sum_{p=1}^j\frac{1}{(i+1)(j+1)}(R_f\left[[(q-1)\delta,q\delta]\times [(p-1)\delta,p\delta]\right]+R_f\left[[(q-1)\delta,q\delta]\times \\ [p\delta,(p+1)\delta]\right]+R_f\left[[q\delta,(q+1)\delta]\times [(p-1)\delta,p\delta]\right]+R_f\left[[q\delta,(q+1)\delta]\times [p\delta,(p+1)\delta]\right])\right)\\
+\frac{1}{\delta} \sum_{j=1}^n \sum_{i=1}^m (i+1)^{\gamma_1}(j+1)^{\gamma_2}\delta^{\gamma_1+\gamma_2} \frac{\max _{0\leq (x,y) \leq 1} f(x,y)}{\gamma_1\gamma_2}
\leq \frac{1}{\delta}\left(C+ \sum_{j=1}^n \sum_{i=1}^m \left( \frac{1}{(i+1)(j+1)}R_f\left[[0,\delta]\times [0,\delta]\right]\\
+\sum_{p=1}^j \frac{1}{(i+1)(j+1)}\left( R_f\left[[0,\delta]\times [(p-1)\delta,p\delta]\right]+R_f\left[[0,\delta]\times [p\delta,(p+l)\delta]\right]\right)\\
+\sum_{q=1}^i\frac{1}{(i+1)(j+1)}(R_f\left[[(q-1)\delta,q\delta]\times [0,\delta]\right]+R_f\left[[q\delta,(q+1)\delta]\times [0,\delta]\right])\\
+\sum_{q=1}^i \sum_{p=1}^j\frac{1}{(i+1)(j+1)}(R_f\left[[(q-1)\delta,q\delta]\times [(p-1)\delta,p\delta]\right]+R_f\left[[(q-1)\delta,q\delta]\times [p\delta,(p+1)\delta]\right]+R_f\left[[q\delta,(q+1)\delta]\times [(p-1)\delta,p\delta]\right]+R_f\left[[q\delta,(q+1)\delta]\times [p\delta,(p+1)\delta]\right]) \right)\right)
\leq\frac{C}{\delta} \left( \sum_{j=0}^n \sum_{i=0}^m \frac{1}{(i+1)(j+1)} \right)\left( \sum_{j=1}^n \sum_{i=1}^m R_f[A_{ij}] \right)
\leq \frac{C}{\delta} (\log m)( \log n) \sum_{j=0}^n \sum_{i=0}^m R_f[A_{ij}] 
\leq C (\log m) (\log n) N_\delta(Gr(f)).
\end{dmath*}
Therefore, 
\begin{dmath*}
\frac{\log N_\delta(Gr(\mathcal{I}^\gamma f))}{-\log \delta}\leq \frac{\log \left \{ C (\log m) (\log n) N_\delta(Gr(f))\right\}}{-\log \delta}
\leq \frac{\log C}{-\log \delta}+\frac{\log(\log m)}{-\log \delta}+\frac{\log(\log n)}{-\log \delta}+\frac{\log N_\delta(Gr(f))}{-\log \delta}.
\end{dmath*}
So, we obtain
\begin{dmath*}
\overline{\dim}_B Gr(\mathcal{I}^{\gamma}f,[0,1]\times [0,1])=\overline{\lim_{\delta \to 0}}\frac{\log N_\delta(Gr(\mathcal{I}^\gamma f))}{-\log \delta}
\leq \overline{\lim_{\delta \to 0}} \left( \frac{\log C}{-\log \delta}+\frac{\log(\log m)}{-\log \delta}+\frac{\log(\log n)}{-\log \delta}+\frac{\log N_\delta(Gr(f))}{-\log \delta}\right)
\leq \overline{\lim_{\delta \to 0}}\frac{\log N_\delta(Gr(f))}{-\log \delta}
=\lim_{\delta \to 0}\frac{\log N_\delta(Gr(f))}{-\log \delta}=2.
\end{dmath*}
Thus Inequality \ref{14} holds. By combining Inequalities  \ref{12} and \ref{14}, we get desired result.
\end{proof}
\begin{corollary} Let $f:[0,1]\times[0,1]\to \mathbb{R}$ be a continuous function and $0<\gamma_1<1,~0<\gamma_2 <1.$ If $f$ is of bounded variation in Arzel\'{a} sense, then $$\dim_B Gr(\mathcal{I}^{\gamma}f,[0,1]\times[0,1])=2.$$
\end{corollary}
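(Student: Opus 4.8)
The plan is to deduce the corollary from Theorem~\ref{Th5}, whose only substantial hypothesis is that $\dim_B Gr(f,[0,1]\times[0,1])=2$. First I would invoke the result of Verma and Viswanathan \cite{V1}, quoted in the Introduction, that a continuous function which is of bounded variation in the sense of Arzel\'{a} has a graph of box dimension exactly $2$; this supplies the dimension hypothesis for free. The one thing that prevents an immediate one-line application of Theorem~\ref{Th5} is that that theorem is stated for \emph{non-negative} $f$, whereas here $f$ is an arbitrary continuous Arzel\'{a}-BV function. Bridging this sign mismatch is the only real content of the proof.

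To remove the sign restriction I would pass to a vertical translate. Since $f$ is continuous on the compact square, $M:=\sup_{[0,1]^2}|f|<\infty$, and $g:=f+M$ is continuous and non-negative. Adding a constant does not disturb the Arzel\'{a} decomposition (if $f=g_1-g_2$ with $\Delta_{10}g_i\ge 0$ and $\Delta_{01}g_i\ge 0$, then $g=(g_1+M)-g_2$ is such a decomposition for $g$); equivalently, $Gr(g)$ is a rigid translate of $Gr(f)$, so $\dim_B Gr(g,[0,1]\times[0,1])=2$ by translation invariance of box dimension. Applying Theorem~\ref{Th5} to the non-negative function $g$ then yields $\dim_B Gr(\mathcal{I}^{\gamma}g,[0,1]\times[0,1])=2$.

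It remains to transfer this back to $f$. By linearity of the mixed R--L integral, $\mathcal{I}^{\gamma}f=\mathcal{I}^{\gamma}g-M\,\mathcal{I}^{\gamma}\mathbf{1}$, where $\mathbf 1$ is the constant function $1$ and a direct computation gives $\mathcal{I}^{\gamma}\mathbf{1}(x,y)=x^{\gamma_1}y^{\gamma_2}/\big(\Gamma(\gamma_1+1)\Gamma(\gamma_2+1)\big)$. This correction function is monotone non-decreasing in each variable separately, hence of bounded variation in the Arzel\'{a} sense, so its graph also has box dimension $2$. Because the maximal range $R_{(\cdot)}[A_{ij}]$ is subadditive, $R_{\mathcal{I}^{\gamma}f}[A_{ij}]\le R_{\mathcal{I}^{\gamma}g}[A_{ij}]+M\,R_{\mathcal{I}^{\gamma}\mathbf{1}}[A_{ij}]$ on every cell; feeding this into the upper estimate of Lemma~\ref{lmm} for $\mathcal{I}^{\gamma}f$ and into the lower estimates for $g$ and $\mathbf 1$ gives $N_\delta(Gr(\mathcal{I}^{\gamma}f))\le 2mn+N_\delta(Gr(\mathcal{I}^{\gamma}g))+M\,N_\delta(Gr(\mathcal{I}^{\gamma}\mathbf{1}))$. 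Passing to $\log(\cdot)/(-\log\delta)$ and letting $\delta\to0$, each of the three terms on the right contributes at most $2$ (the first since $m,n\asymp\delta^{-1}$, the other two by the dimension values just recorded), so $\overline{\dim}_B Gr(\mathcal{I}^{\gamma}f)\le 2$. Combined with the trivial lower bound $\underline{\dim}_B Gr(\mathcal{I}^{\gamma}f)\ge 2$, obtained exactly as in Inequality~\ref{12} since the graph projects onto $[0,1]\times[0,1]$, this gives $\dim_B Gr(\mathcal{I}^{\gamma}f,[0,1]\times[0,1])=2$.

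The main obstacle is precisely this non-negativity gap between Theorem~\ref{Th5} and the corollary; the rest is bookkeeping. The shift $g=f+M$ is harmless for the hypotheses, but one must verify that subtracting the correction $M\,\mathcal{I}^{\gamma}\mathbf{1}$ does not inflate the box dimension. I would route that final estimate through subadditivity of the range together with Lemma~\ref{lmm}, rather than through any bi-Lipschitz change of variables, because $x^{\gamma_1}y^{\gamma_2}$ fails to be Lipschitz near the coordinate axes and so the naive graph-translation argument is unavailable there.
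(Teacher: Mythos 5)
Your proposal follows the same route as the paper: both proofs quote the Verma--Viswanathan result that a continuous function of bounded variation in the Arzel\'{a} sense has graph of box dimension $2$, and then feed this into Theorem~\ref{Th5}. The difference is that the paper applies Theorem~\ref{Th5} directly, silently ignoring that the theorem is stated only for \emph{non-negative} $f$, whereas you notice this mismatch and repair it: you shift to $g=f+M\ge 0$ (which preserves both the Arzel\'{a} decomposition and, by translation invariance, the box dimension of the graph), apply Theorem~\ref{Th5} to $g$, and then transfer the conclusion back to $f$ via $\mathcal{I}^{\gamma}f=\mathcal{I}^{\gamma}g-M\,\mathcal{I}^{\gamma}\mathbf{1}$ using subadditivity of the maximal range and the two-sided counting estimates of Lemma~\ref{lmm}. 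All of these steps check out --- in particular $\mathcal{I}^{\gamma}\mathbf{1}(x,y)=x^{\gamma_1}y^{\gamma_2}/(\Gamma(\gamma_1+1)\Gamma(\gamma_2+1))$ is monotone in each variable, so its graph indeed has box dimension $2$, and the inequality $N_\delta(Gr(\mathcal{I}^{\gamma}f))\le 2mn+N_\delta(Gr(\mathcal{I}^{\gamma}g))+M\,N_\delta(Gr(\mathcal{I}^{\gamma}\mathbf{1}))$ follows from combining the upper bound of Lemma~\ref{lmm} for $\mathcal{I}^{\gamma}f$ with the lower bounds for the other two functions. So your argument is correct and is in fact more complete than the one printed in the paper; the extra translation-and-correction step is exactly what is needed to make the deduction from Theorem~\ref{Th5} legitimate for sign-changing $f$.
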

\begin{proof}
From Remark 3.13 in \cite{V1}, if $f:[0,1]\times[0,1]\to \mathbb{R}$ is continuous and of bounded variation in Arzel\'{a} sense on $[0,1]\times [0,1]$, then 
$$\dim_B Gr(f,[0,1]\times [0,1])=2.$$
Thus, by using Theorem \ref{Th5}, we get
$$\dim_B Gr(\mathcal{I}^\gamma f,[0,1]\times [0,1])=2.$$
This completes the proof.
\end{proof}
\begin{remark} Thus, Theorem 4.5 of \cite{V1} follows from our Theorem \ref{Th5}. In \cite{V1}, Verma and Viswanathan proved that the box dimension of the fractional integral of mixed R-L type of a continuous function which is of bounded variation in Arzel\'{a} sense on $[0,1]\times [0,1]$ is 2. Their results are more concern with analytical  aspects in the sense that they are using notion of bounded variation. But in the Theorem \ref{Th5}, we have proved that if a continuous function has box dimension two then the box dimension of its fractional integral of mixed R-L type is also two. That is, we are using the dimension of function to compute the dimension of its fractional integral of mixed R-L type. So our results are more concern with dimensional aspects.
\end{remark}
Now, we are going to corroborate the Theorem \ref{Th5} by using existing results.
\begin{lemma}
\cite{V1} \label{lmV2} Let a function $h:[c,d] \to \mathbb{R}$ be continuous. Consider a set as $H=\{(x,y,h(y)):x\in [a,b], y \in[c,d]\}$ with $a<b.$ Then it holds, $\overline{\dim}_B(H) \leq \overline{\dim}_B(Gr(h))+1.$
\end{lemma}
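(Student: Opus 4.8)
The plan is to recognize $H$ as a product set and then invoke the elementary box-counting inequality for products. Up to the harmless relabelling of coordinates, $H = [a,b] \times Gr(h)$, where the first factor is the interval $[a,b]$ in the $x$-direction and the second factor $Gr(h) = \{(y,h(y)) : y \in [c,d]\}$ occupies the remaining two coordinates. Accordingly I would estimate $N_\delta(H)$ directly from covers of the two factors, where $N_\delta(\cdot)$ is as in Definition \ref{DefB}.

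First I would fix $0 < \delta < 1$ and cover $[a,b]$ by at most $1 + (b-a)/\delta$ subintervals of length $\delta$, and cover $Gr(h)$ by $N_\delta(Gr(h))$ sets of diameter at most $\delta$. Taking all Cartesian products of one subinterval with one covering set produces a cover of $H$ by sets of diameter at most $\sqrt{2}\,\delta$, and its cardinality is at most $\left(1 + (b-a)/\delta\right) N_\delta(Gr(h))$. Since box dimensions are unaffected by rescaling the covering diameter by a fixed constant factor, this gives
$$N_\delta(H) \le \frac{C}{\delta}\, N_\delta(Gr(h))$$
for $\delta < 1$, with $C = 1 + (b-a)$ depending only on the length of the interval.

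Next I would take logarithms and divide by $-\log\delta$:
$$\frac{\log N_\delta(H)}{-\log\delta} \le \frac{\log C}{-\log\delta} + \frac{\log(1/\delta)}{-\log\delta} + \frac{\log N_\delta(Gr(h))}{-\log\delta}.$$
Passing to the upper limit as $\delta \to 0$, the first term tends to $0$, the middle term is identically equal to $1$ since $\log(1/\delta) = -\log\delta$, and the last term has upper limit $\overline{\dim}_B(Gr(h))$. Hence $\overline{\dim}_B(H) \le \overline{\dim}_B(Gr(h)) + 1$, as claimed.

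There is no serious obstacle here; the argument is a direct box-counting estimate. The only point requiring a little care is the final passage to the upper limit, where one should exploit that the interval factor contributes a genuine limit equal to $1$, so that $\limsup_{\delta\to 0}(a_\delta + b_\delta) \le \lim_{\delta\to 0} a_\delta + \limsup_{\delta\to 0} b_\delta$ applies, rather than naively summing two upper limits.
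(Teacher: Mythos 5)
Your argument is correct. Note that the paper itself offers no proof of this lemma; it is imported verbatim from \cite{V1}, so there is nothing internal to compare against. What you give is the standard product estimate $\overline{\dim}_B(E\times F)\le \overline{\dim}_B(E)+\overline{\dim}_B(F)$ specialised to $E=[a,b]$ and $F=Gr(h)$, and the identification $H=[a,b]\times Gr(h)$ is exact (no coordinate permutation is even needed). The two points that require care --- that the product cover consists of sets of diameter at most $\sqrt{2}\,\delta$ rather than $\delta$, so one is really bounding $N_{\sqrt{2}\delta}(H)$ and must invoke the insensitivity of box dimension to a fixed rescaling of the covering scale, and that the final passage to the upper limit uses $\limsup(a_\delta+b_\delta)\le \lim a_\delta+\limsup b_\delta$ when one summand genuinely converges --- are both acknowledged and handled correctly in your write-up.
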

\begin{remark}\label{rmk1}
Let $h_1:[a,b] \to \mathbb{R}$ and $h_2:[c,d] \to \mathbb{R}$ are two continuous maps. Now, define $g_1,g_2:[a,b]\times [c,d]\to \mathbb{R}$ such that $$g_1(x,y)=h_1(x)+h_2(y), ~~\text{and}~~~g_2(x,y)=h_1(x)h_2(y).$$
From Lemma \ref{lmV2}, we get $\overline{\dim}_BGr(g_1)\le \overline{\dim}_BGr(h_2)+1$ and $\overline{\dim}_BGr(g_2)\le \overline{\dim}_BGr(h_2)+1.$
\end{remark}
\begin{remark}
Let $g:[a,b]\to \mathbb{R}$ be a continuous function which box dimension is $1$. We define a bivariate continuous function $f:[a,b]\times [c,d] \to \mathbb{R}$ such that $f(x,y)=g(x).$ From definition \ref{Def1}, we have
$$ \mathcal{I}^{\gamma}f(x,y)=\frac{1}{\Gamma (\gamma_1)  \Gamma (\gamma_2)} \int_a ^x \int_c ^y (x-u)^{\gamma_1-1} (y-v)^{\gamma_2-1}f(u,v)dudv.$$
For $\gamma_2=1,$ we get
$$ \mathcal{I}^{\gamma}f(x,y)=\frac{1}{\Gamma (\gamma_1)} \int_a ^x \int_c ^y (x-u)^{\gamma_1-1}f(u,v)dudv.$$
By definition of $f$, we obtain
$$ \mathcal{I}^{\gamma}f(x,y)=\frac{y-c}{\Gamma (\gamma_1)} \int_a ^x (x-u)^{\gamma_1-1}g(u)du.$$
So, we have a relation between the fractional integral of  R-L type of $g$, namely
$$\mathcal{I}^{\gamma_1}g(x)={\Gamma (\gamma_1)} \int_a ^x (x-u)^{\gamma_1-1}g(u)du,$$ 
and the fractional integral of mixed R-L type of $f$ as
$$\mathcal{I}^{\gamma}f(x,y)=(y-c)\mathcal{I}^{\gamma_1}g(x).$$
Now, from remark \ref{rmk1}, we know that $\overline{\dim}_BGr(\mathcal{I}^{\gamma}f)\le \overline{\dim}_BGr(\mathcal{I}^{\gamma_1}g)+1.$ Since, $\dim_B Gr(g)=1$, from Theorem 3.1 in \cite{L3}, it follows that $\dim_BGr(\mathcal{I}^{\gamma_1}g)=1,$ and hence $\dim_BGr(\mathcal{I}^{\gamma}f)=2.$ This corroborates the Theorem \ref{Th5}.
\end{remark}
%%%%%%%%%%%%%%%%%%%%%%%%%%%%%%%%%%%%%%%%%%%
\section{\textbf{Fractal Dimension  of $\mathcal{I}^{\gamma}f(x,y)$ of Unbounded Variational Continuous Functions}}
 First we give a sketch of construction of continuous functions having unbounded varaiational (UV) property at a single point. Then we investigate the fractal dimension of its  fractional integral of mixed R-L type. \\\\
\textbf{Construction of UV Continuous Functions:}\\
Consider $[0,1]\times [0,1].$ Let $\lim_{n \to \infty}a_n=1,$ where $(a_n)$ is the increasing sequence of real numbers in $[0,1].$ For our construction, we take a sequence $(a_n)_{n\ge 0}$ by considering $a_0=0$ and $a_n=\frac{1}{2}+\frac{1}{2^2}+...+\frac{1}{2^n},~ n\in \mathbb{N}.$ Let us define a continuous function $\Theta(x,y)=x(x-0.5)y$ on $[0,0.5]\times [0,1]$ such that $$\Theta(0,y)=\Theta(0.5,y)~~\forall~y\in [0,1].$$
We shall refer $\Theta$ as generating function. Let $\Upsilon_n$ be  map from $[a_{n-1},a_n]$ onto $[0,0.5]$ given by $$\Upsilon_n(x,y)=2^{(n-1)}(x-a_{n-1}).$$
Let us define $G_1(x,y)=\Theta(x,y)$ for $(x,y)\in [0,0.5]\times [0,1]$ and $n\ge 2,$ 
$$G_n(x,y)=\frac{1}{n}\Theta (\Upsilon_n(x),y)+\frac{n-1}{n}\Theta (0,y)~\text{for}~(x,y) \in [a_{n-1},a_n]\times [0,1].$$
Now, we denote that $F_n(x,y)$ is the composed of $G_1(x,y), G_2(x,y),...,G_n(x,y).$ Let $$M(x,y)=\lim_{n \to \infty} F_n(x,y).$$
The graph of $M(x,y)$ is given in Figure $1.$
\begin{lemma} The function $M$ is bounded and continuous on $[0,1]\times [0,1].$\end{lemma}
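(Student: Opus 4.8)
The plan is to prove the two assertions separately, exploiting the self-similar structure of the pieces $G_n$ together with the vanishing amplitude factor $\tfrac1n$. Throughout I would use that, since $\Theta(0,y)=0$, the defining formula collapses to $G_n(x,y)=\tfrac1n\Theta(\Upsilon_n(x),y)$ for every $n\ge1$ (the extra term $\tfrac{n-1}{n}\Theta(0,y)$ vanishes), and that each $G_n$ governs $M$ on the cell $[a_{n-1},a_n]\times[0,1]$, with $[0,1)=\bigcup_{n\ge1}[a_{n-1},a_n]$.

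For boundedness, I would first observe that the generating function $\Theta(x,y)=x(x-0.5)y$ is a polynomial, hence continuous on the compact rectangle $[0,0.5]\times[0,1]$, so it attains a finite bound $K:=\sup_{[0,0.5]\times[0,1]}|\Theta|$. Because $\Upsilon_n$ maps $[a_{n-1},a_n]$ onto $[0,0.5]$, on its cell we get $|M(x,y)|=|G_n(x,y)|=\tfrac1n|\Theta(\Upsilon_n(x),y)|\le \tfrac{K}{n}\le K$. Taking the union over $n$ yields $|M|\le K$ on $[0,1)\times[0,1]$, and after recording the value $M(1,y)=0$ forced by the limit, the uniform bound $|M|\le K$ holds on the whole square.

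For continuity I would split the square into three regimes. On the interior $(a_{n-1},a_n)\times[0,1]$ of each cell, $M=G_n$ is a composition of the affine map $\Upsilon_n$ with the polynomial $\Theta$, hence continuous. Across each vertical gluing line $x=a_n$ with $n\ge1$, I would verify that the two adjacent pieces agree: since $\Upsilon_n(a_n)=2^{n-1}(a_n-a_{n-1})=2^{n-1}\cdot 2^{-n}=0.5$ and $\Upsilon_{n+1}(a_n)=0$, the left value is $G_n(a_n,y)=\tfrac1n\Theta(0.5,y)=0$ and the right value is $G_{n+1}(a_n,y)=\tfrac1{n+1}\Theta(0,y)=0$. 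This matching is exactly the purpose of the built-in condition $\Theta(0,y)=\Theta(0.5,y)=0$, and it makes $M$ continuous on all of $[0,1)\times[0,1]$.

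The only genuinely delicate point, and the main obstacle, is continuity along the accumulation line $x=1$, where infinitely many cells pile up; here the uniform amplitude decay does the work. If $(x,y)$ lies in $[a_{n-1},a_n]\times[0,1]$ then $|M(x,y)-M(1,y)|=|M(x,y)|\le K/n$, and since $x\to1$ forces $a_{n-1}\to1$, i.e.\ the cell index $n=n(x)\to\infty$, the right-hand side tends to $0$ uniformly in $y$. Hence $M(x,y)\to 0=M(1,y)$ as $x\to1^-$, which completes continuity on $[0,1]\times[0,1]$. I expect the bookkeeping of which cell a given $x$ inhabits, and the verification that $n(x)\to\infty$ as $x\to1$, to require the most care, but beyond the $\tfrac1n$ estimate no hard analysis is involved.
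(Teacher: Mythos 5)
Your argument is correct, and it is the natural one: the paper actually states this lemma without giving any proof, so there is nothing to compare against. You correctly identify the two ingredients the construction is designed to provide, namely the matching condition $\Theta(0,y)=\Theta(0.5,y)=0$, which makes the pieces $G_n$ agree at each gluing line $x=a_n$ (both one-sided values are $0$), and the amplitude factor $\tfrac1n$, which gives the uniform bound $|M|\le K/n$ on the $n$-th cell and hence continuity at the accumulation line $x=1$ once one sets $M(1,y)=0$. The only bookkeeping worth making explicit is that $x\in[a_{n-1},a_n]$ means $2^{-n}\le 1-x\le 2^{-(n-1)}$, so the cell index $n(x)$ indeed tends to infinity as $x\to 1^-$; with that, the pasting-lemma step on $[0,1)\times[0,1]$ and the limit step at $x=1$ are both complete.
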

\begin{theorem}
The function $M$ is not of bounded variation on $[0,1]\times [0,1].$
\end{theorem}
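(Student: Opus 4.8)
The plan is to reduce the two-dimensional statement to a one-dimensional variation estimate by using the characterization of Arzel\'{a} bounded variation recorded in Theorem \ref{th2}. Suppose, for contradiction, that $M$ were of bounded variation in the Arzel\'{a} sense. Then $M = g_1 - g_2$ with $g_1,g_2$ bounded and satisfying $\Delta_{10}g_i \ge 0$ and $\Delta_{01}g_i \ge 0$. The condition $\Delta_{10}g_i \ge 0$ says precisely that, for each fixed second coordinate, $g_i$ is non-decreasing in the first coordinate. Hence, fixing $y=1$, each slice $g_i(\cdot,1)$ is a bounded non-decreasing function of $x$ on $[0,1]$, so the slice $M(\cdot,1) = g_1(\cdot,1) - g_2(\cdot,1)$ would be of ordinary one-variable bounded variation on $[0,1]$. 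The whole proof then comes down to showing that $x \mapsto M(x,1)$ has infinite variation.

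To do this, I would first make the slice explicit. Because $\Theta(0,y) = \Theta(0.5,y) = 0$, the term $\tfrac{n-1}{n}\Theta(0,y)$ vanishes, so on each strip $[a_{n-1},a_n]\times[0,1]$ the function collapses to $M(x,y) = \tfrac{1}{n}\Theta(\Upsilon_n(x),y)$ (with coefficient $1$ when $n=1$, since $\Upsilon_1$ is the identity). Setting $y=1$ gives $M(x,1) = \tfrac{1}{n}\Upsilon_n(x)\bigl(\Upsilon_n(x)-0.5\bigr)$, which vanishes at the endpoints $x=a_{n-1},a_n$ and attains its minimum $-\tfrac{1}{16n}$ at the midpoint $m_n = a_{n-1} + 2^{-(n+1)}$, where $\Upsilon_n(m_n) = \tfrac14$. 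On the $n$-th strip the slice therefore decreases from $0$ to $-\tfrac{1}{16n}$ and increases back to $0$, contributing exactly $\tfrac{1}{8n}$ to the total variation. Taking the partition with nodes $a_{n-1}, m_n, a_n$ for $n=1,\dots,N$, the variation of $M(\cdot,1)$ over $[0,a_N]$ equals $\sum_{n=1}^{N}\tfrac{1}{8n}$, which diverges as $N\to\infty$ by the divergence of the harmonic series. Thus $M(\cdot,1)$ is not of bounded variation, contradicting the existence of the decomposition $M = g_1 - g_2$, and so $M$ is not of bounded variation in the Arzel\'{a} sense.

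The conceptual crux is the reduction step: one must be certain that Arzel\'{a} bounded variation of $M$ forces ordinary bounded variation along a fixed horizontal line. This is exactly what the sign condition $\Delta_{10}g_i \ge 0$ in Theorem \ref{th2} supplies, namely monotonicity in $x$ for each fixed $y$, so the reduction is immediate once the characterization is invoked. Everything after that is the elementary harmonic-series estimate, with the $\tfrac{1}{n}$ scaling built into the definition of $G_n$ being precisely what makes the summed variations diverge; I would not expect any further obstacle there.
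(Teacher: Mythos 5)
Your proposal is correct and follows essentially the same route as the paper: restrict $M$ to a horizontal line, show that the $\tfrac1n$ scaling in $G_n$ makes the per-strip variation contribute a harmonic-series term, and conclude via the Arzel\'{a} characterization of Theorem \ref{th2} that $M$ cannot be of bounded variation. Your version is in fact a little cleaner, since fixing $y=1$ and computing the exact oscillation $\tfrac{1}{8n}$ on each strip avoids the unspecified constants and the slightly garbled indexing in the paper's choice of the points $w_i$.
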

\begin{proof}
It can be seen that $\Theta$ is non-constant function along the line $y=y_0$ for some $y_0\in [0,1].$ For $C>0$ and some $u_1,u_2 \in [a_0,a_1]$ with $u_1< u_2,$,  we have $$\lvert \Theta(u_1,y_0)-\Theta(u_2,y_0) \rvert \ge C.$$ Choose $w_1,w_2 \in [a_0,a_1], w_1 <w_2$ such that 
$$\lvert G_1(w_1,y_0)-G_1(w_2,y_0) \rvert=\lvert \Theta(t,y_0)-\Theta(u,y_0) \rvert\ge C.$$
We can choose $w_3,w_4\in [a_1,a_2], w_3<w_4$  such that
$$\lvert G_2(w_3,y_0)-G_2(w_4,y_0) \rvert=\frac{1}{2}\lvert \Theta(t,y_0)-\Theta(u,y_0) \rvert\ge \frac{C}{2}.$$
We can see that for $i>1, w_i=\Upsilon_{i-1}^{-1}(t)$ and $w_{i+1}=\Upsilon_{i-1}^{-1}(u).$ By proceeding in similar way, we get a collection
$P'=\{w_i:w_1<w_2<w_3<...<w_{2n}\}.$ Now, we take partition $P$ of $[0,1]$ such that $P'\subset P.$ The variation of $M$ along the line $y=y_0$ denoted by $V(M,[0,1],y_0)$ is $$V(M,[0,1],y_0)\ge \sum_{i=1}^{2n}\lvert M(w_{i+1},y_0)-M(w_i,y_0) \rvert \ge \sum_{i=1}^n \lvert G_i(w_{i+1},y_0)-G_i(w_i,y_0) \rvert  \ge \sum_{i=1}^n \frac{C}{i}.$$
Since $C>0$  and $ \sum_{i=1}^n \frac{1}{i}=\infty$, restriction of $M$, $M|_{y=y_0}$, is not of bounded variation on $[0,1]$ along the line $y=y_0$. So, $M|_{y=y_0}$ can not be written as difference of two increasing functions $g_1,g_2:[0,1]\to \mathbb{R}$ along the line $y=y_0.$ That is, $M|_{y=y_0}=g_1-g_2$ with $\Delta_{10}g_i(x,y_0)\ge 0,~i=1,2$ does not hold. Now, by using Theorem \ref{th2}, it is clear that the function $M$ is not of bounded variation on $[0,1]\times [0,1]$ in Arzel\'{a} sense.
\end{proof}
\begin{lemma}\cite{V1}\label{lm6.2}
 If $f(x,y)\in C([0,1]\times [0,1])$ and of bounded variation on $[0,1]\times [0,1]$ in Arzel\'{a} sense, then $\mathcal{I}^\gamma f(x,y)\in C([0,1]\times [0,1]) $ and of bounded variation on  $[0,1]\times [0,1]$ in Arzel\'{a} sense.
\end{lemma}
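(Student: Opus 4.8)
The plan is to prove the two assertions—continuity and bounded variation in the Arzel\'a sense—separately, obtaining the second by reducing it, via the structural characterization in Theorem \ref{th2}, to a monotonicity statement for $\mathcal{I}^\gamma$. For continuity there is nothing new to do: since $f$ is continuous on $[0,1]\times[0,1]$, the mixed R--L fractional integral $\mathcal{I}^{\gamma}f$ is continuous, which is exactly the fact already invoked from Theorem 4.2 of \cite{V1} in the proof of Theorem \ref{Th5}.

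For the bounded variation part, I would first apply Theorem \ref{th2} to write $f=f_1-f_2$, where $f_1,f_2$ are bounded and satisfy $\Delta_{10}f_i\ge 0$ and $\Delta_{01}f_i\ge 0$ for $i=1,2$. Because the $f_i$ are bounded, I may add a sufficiently large constant $K$ to each, replacing $f_i$ by $f_i+K$; this leaves the difference $f_1-f_2=f$ and the sign conditions on $\Delta_{10},\Delta_{01}$ unchanged (constants cancel in the finite differences), while arranging $f_i\ge 0$. By linearity of $\mathcal{I}^{\gamma}$ one has $\mathcal{I}^{\gamma}f=\mathcal{I}^{\gamma}f_1-\mathcal{I}^{\gamma}f_2$, so it suffices to show that each $\mathcal{I}^{\gamma}f_i$ is bounded and again satisfies $\Delta_{10}\ge 0$ and $\Delta_{01}\ge 0$; Theorem \ref{th2}, applied in the reverse direction, then yields that $\mathcal{I}^{\gamma}f$ is of bounded variation in the Arzel\'a sense.

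The technical heart is to verify the monotonicity of $\mathcal{I}^{\gamma}g$ for a non-negative $g$ with $\Delta_{10}g\ge 0$ and $\Delta_{01}g\ge 0$. Here I would use the scaling representation already derived in the proof of Theorem \ref{Th5},
\begin{equation*}
\mathcal{I}^{\gamma}g(x,y)=\frac{x^{\gamma_1}y^{\gamma_2}}{\Gamma(\gamma_1)\Gamma(\gamma_2)}\int_0^1\int_0^1 (1-s)^{\gamma_1-1}(1-t)^{\gamma_2-1}\,g(xs,yt)\,ds\,dt,
\end{equation*}
obtained from the substitution $u=xs$, $v=yt$. Boundedness is immediate, since bounding $g$ by $\sup|g|$ and integrating the kernel gives $|\mathcal{I}^{\gamma}g(x,y)|\le \sup|g|/(\Gamma(\gamma_1+1)\Gamma(\gamma_2+1))$ on $[0,1]\times[0,1]$. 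For the sign condition $\Delta_{10}(\mathcal{I}^{\gamma}g)\ge 0$, fix $y$ and take $x_1<x_2$: then $x_2^{\gamma_1}\ge x_1^{\gamma_1}\ge 0$ and, since $g$ is non-decreasing in its first argument, $g(x_2 s,yt)\ge g(x_1 s,yt)\ge 0$ for all $s,t\in[0,1]$; as the weight $y^{\gamma_2}(1-s)^{\gamma_1-1}(1-t)^{\gamma_2-1}$ is non-negative, the integrand is non-decreasing in $x$ and integration preserves the inequality, giving $\mathcal{I}^{\gamma}g(x_2,y)\ge \mathcal{I}^{\gamma}g(x_1,y)$. The symmetric argument in the second variable gives $\Delta_{01}(\mathcal{I}^{\gamma}g)\ge 0$.

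The step requiring the most care is precisely the reduction to the non-negative case: the monotone pieces $f_1,f_2$ furnished by Theorem \ref{th2} are only bounded (not necessarily continuous, and not necessarily non-negative), so the product $x^{\gamma_1}g(xs,yt)$ is monotone in $x$ only after shifting $g$ to be non-negative—an increasing but sign-changing $g$ multiplied by the increasing factor $x^{\gamma_1}$ could otherwise fail to be monotone. Once non-negativity is secured, the whole argument rests on the elementary fact that a product of non-negative non-decreasing factors is non-decreasing, and the proof closes by applying Theorem \ref{th2} to the decomposition $\mathcal{I}^{\gamma}f=\mathcal{I}^{\gamma}f_1-\mathcal{I}^{\gamma}f_2$.
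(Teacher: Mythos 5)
Your argument is correct, but there is nothing in the paper to compare it with: Lemma \ref{lm6.2} is imported from \cite{V1} without proof, so you have in effect reconstructed the missing argument. Your route --- decompose $f=f_1-f_2$ via the Arzel\'{a} characterization of Theorem \ref{th2}, normalize the pieces to be non-negative by adding a constant (which cancels in the difference and does not disturb the sign conditions on $\Delta_{10},\Delta_{01}$), and then show via the substitution $u=xs$, $v=yt$ that $\mathcal{I}^{\gamma}$ maps a non-negative function that is non-decreasing in each variable to another such function --- is sound, and the key observation that the prefactor $x^{\gamma_1}y^{\gamma_2}$ forces the normalization $g\ge 0$ (since a product of an increasing positive factor with an increasing sign-changing factor need not be monotone) is exactly the point where a careless version of this proof would break. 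The one step you pass over is that the monotone pieces $f_1,f_2$ supplied by Theorem \ref{th2} are merely bounded, so you should say why $\mathcal{I}^{\gamma}f_i$ exists at all: a bounded function that is non-decreasing in each variable separately is Lebesgue measurable (its superlevel sets are upper sets, whose boundaries are graphs of monotone curves and hence null), so the integrals defining $\mathcal{I}^{\gamma}f_i$ converge; alternatively one can note that $\mathcal{I}^{\gamma}f$ itself is defined from the continuous $f$ and only the identity $\mathcal{I}^{\gamma}f=\mathcal{I}^{\gamma}f_1-\mathcal{I}^{\gamma}f_2$ needs the pieces to be integrable. With that sentence added, your proof is a complete and self-contained justification of the cited lemma, arguably more informative than the bare citation in the text.
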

The following theorem gives the box dimension and the Hausdorff dimension of $\mathcal{I}^\gamma M(x,y).$
\begin{theorem} \label{Th6} Let $0<\gamma_1<1,~0<\gamma_2<1.$ Then
$\mathcal{I}^\gamma M(x,y)$ is finite on $[0,1]\times [0,1]$ and
$$\dim_H Gr(\mathcal{I}^\gamma M, [0,1]\times [0,1])=\dim_B Gr(\mathcal{I}^\gamma M, [0,1]\times [0,1])=2.$$
\end{theorem}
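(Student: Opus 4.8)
The plan is to exploit the product structure hidden in the construction. Since the generating function $\Theta(x,y)=x(x-0.5)y$ is linear in $y$ and satisfies $\Theta(0,y)=0$, every block reduces to $G_n(x,y)=y\,g_n(x)$ with $g_n(x)=\tfrac1n\Upsilon_n(x)(\Upsilon_n(x)-0.5)$; gluing these shows $M(x,y)=y\,\phi(x)$, where $\phi(x):=M(x,1)$. Because $M$ is bounded and continuous, $\mathcal{I}^\gamma M$ is finite (it is dominated by $\|M\|_\infty\,x^{\gamma_1}y^{\gamma_2}/(\Gamma(\gamma_1+1)\Gamma(\gamma_2+1))$) and continuous by Theorem 4.2 of \cite{V1}. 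The base projection of the graph is then $1$-Lipschitz and surjective onto $[0,1]\times[0,1]$, which yields at once $\dim_H Gr(\mathcal{I}^\gamma M)\ge 2$ and $\underline{\dim}_B Gr(\mathcal{I}^\gamma M)\ge 2$. Thus only the upper bound $\overline{\dim}_B Gr(\mathcal{I}^\gamma M)\le 2$ remains.

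Next I would separate the variables in the integral. Writing $M(u,v)=v\,\phi(u)$ and using Fubini, the kernel factors and
\[
\mathcal{I}^\gamma M(x,y)=\Big(\tfrac1{\Gamma(\gamma_1)}\int_0^x (x-u)^{\gamma_1-1}\phi(u)\,du\Big)\Big(\tfrac1{\Gamma(\gamma_2)}\int_0^y (y-v)^{\gamma_2-1}v\,dv\Big)=c(y)\,\psi(x),
\]
where the Beta integral gives $c(y)=y^{\gamma_2+1}/\Gamma(\gamma_2+2)$ and $\psi:=\mathcal{I}^{\gamma_1}\phi$ is the one–dimensional R--L fractional integral of $\phi$. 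As $\gamma_2+1>1$, the factor $c$ is $C^1$ on $[0,1]$, hence $\dim_B Gr(c)=1$. Granting the one-dimensional fact $\dim_B Gr(\phi,[0,1])=1$ (discussed below), Theorem 3.1 of \cite{L3} gives $\dim_B Gr(\psi,[0,1])=1$, and then Remark \ref{rmk1}, applied exactly as in the last remark of Section 5 to the product $c(y)\psi(x)$, yields $\overline{\dim}_B Gr(\mathcal{I}^\gamma M)\le \dim_B Gr(\psi)+1=2$. Together with the lower bounds this forces $\dim_B Gr(\mathcal{I}^\gamma M)=2$, and since $2\le\dim_H\le\overline{\dim}_B\le 2$ we also get $\dim_H Gr(\mathcal{I}^\gamma M)=2$. (Alternatively, once $\dim_B Gr(M)=2$ is known, applying Theorem \ref{Th5} to the nonnegative function $-M$ and reflecting gives the same conclusion.)

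The crux, and what I expect to be the main obstacle, is the one-dimensional claim $\dim_B Gr(\phi,[0,1])=1$. Here the convenient bounded-variation shortcut is unavailable, because the construction is precisely designed so that $\phi=M|_{y=1}$ has unbounded variation. Instead I would estimate $N_\delta(Gr(\phi))$ directly via the one-variable form of Lemma \ref{lmm}. The key observation is that on the $n$-th block $[a_{n-1},a_n]$, of length $2^{-n}$, the function $\phi$ is a rescaled parabola of amplitude $O(1/n)$, so its oscillation there is $O(1/n)$. Choosing $\delta=2^{-N}$ and summing oscillations over the resolved blocks $n\le N$ gives a contribution $\asymp\sum_{n\le N}\tfrac1{n\delta}\asymp \delta^{-1}\log N$, while the unresolved tail $[a_N,1]$ (of length $\asymp\delta$ and oscillation $O(1/N)$) is lower order. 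Hence $N_\delta(Gr(\phi))\asymp \delta^{-1}\log\log(1/\delta)$, and dividing $\log N_\delta$ by $-\log\delta$ sends the ratio to $1$. The only delicate bookkeeping is verifying that the divergent harmonic sum contributes merely a $\log\log(1/\delta)$ (hence subpolynomial) factor, so that the box dimension is pinned at $1$ in spite of the unbounded variation; this single estimate drives the entire theorem.
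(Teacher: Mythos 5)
Your proposal is correct, but it reaches the crucial upper bound $\overline{\dim}_B Gr(\mathcal{I}^{\gamma}M,[0,1]\times[0,1])\le 2$ by a genuinely different route from the paper. The paper never uses the product structure of the generating function: it notes that only finitely many blocks meet $[0,1-\delta)\times[0,1-\delta)$, so $M$ (hence, via Lemma \ref{lm6.2}, $\mathcal{I}^{\gamma}M$) is of bounded variation there, counts $C/\delta^{2}$ cubes over that region, and absorbs the thin strip near $(1,1)$ separately. You instead observe that $\Theta$ is linear in $y$ with $\Theta(0,y)=0$, so $M(x,y)=y\,\phi(x)$ and, by Fubini and the beta integral, $\mathcal{I}^{\gamma}M(x,y)=c(y)\,\psi(x)$ with $c(y)=y^{\gamma_2+1}/\Gamma(\gamma_2+2)$ smooth and $\psi=\mathcal{I}^{\gamma_1}\phi$; the whole theorem then reduces to the one-dimensional claim $\dim_B Gr(\phi,[0,1])=1$, which you prove by summing the block oscillations $O(1/n)$ into a harmonic sum and getting $N_\delta(Gr(\phi))=O(\delta^{-1}\log\log(1/\delta))$, after which Theorem 3.1 of \cite{L3} and Remark \ref{rmk1} lift the result back to two variables exactly as in the paper's final remark of Section 5 (note that this use of Remark \ref{rmk1} is safe only because the companion factor $c$ is Lipschitz, which it is here, just as $y-c$ is in the paper's own application). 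Each approach buys something: the paper's localization argument does not depend on $\Theta$ being separable and so would survive a non-product generating function, whereas your argument is tied to the factorization $M(x,y)=y\phi(x)$; on the other hand, your explicit $\log\log(1/\delta)$ bookkeeping quantifies, and thereby actually justifies, the point the paper's proof glosses over --- the constant ``$C$'' in its count $C/\delta^{2}$ over $[0,1-\delta)^{2}$ is not uniform but grows like the partial harmonic sum, and one must check that this growth is subpolynomial. Your finiteness and lower-bound arguments coincide with the paper's.
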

\begin{proof}
For $0<\gamma_1<1,~0<\gamma_2<1,$ we have 
\begin{dmath*}
\lvert \mathcal{I}^\gamma M(x,y) \rvert=\left \lvert \frac{1}{\Gamma (\gamma_1)  \Gamma (\gamma_2)} \int_0 ^x \int_0 ^y (x-u)^{\gamma_1-1} (y-v)^{\gamma_2-1}M(u,v)dudv \right \rvert 
\le \frac{1}{\Gamma (\gamma_1+1)\Gamma (\gamma_2+1)}x^{\gamma_1}y^{\gamma_2}\max_{(x,y)\in [0,1]\times [0,1]}\lvert M(x,y)\rvert
\le \frac{1}{\Gamma (\gamma_1+1)\Gamma (\gamma_2+1)}.
\end{dmath*}
\begin{figure}
\centering
\includegraphics[scale=0.5]{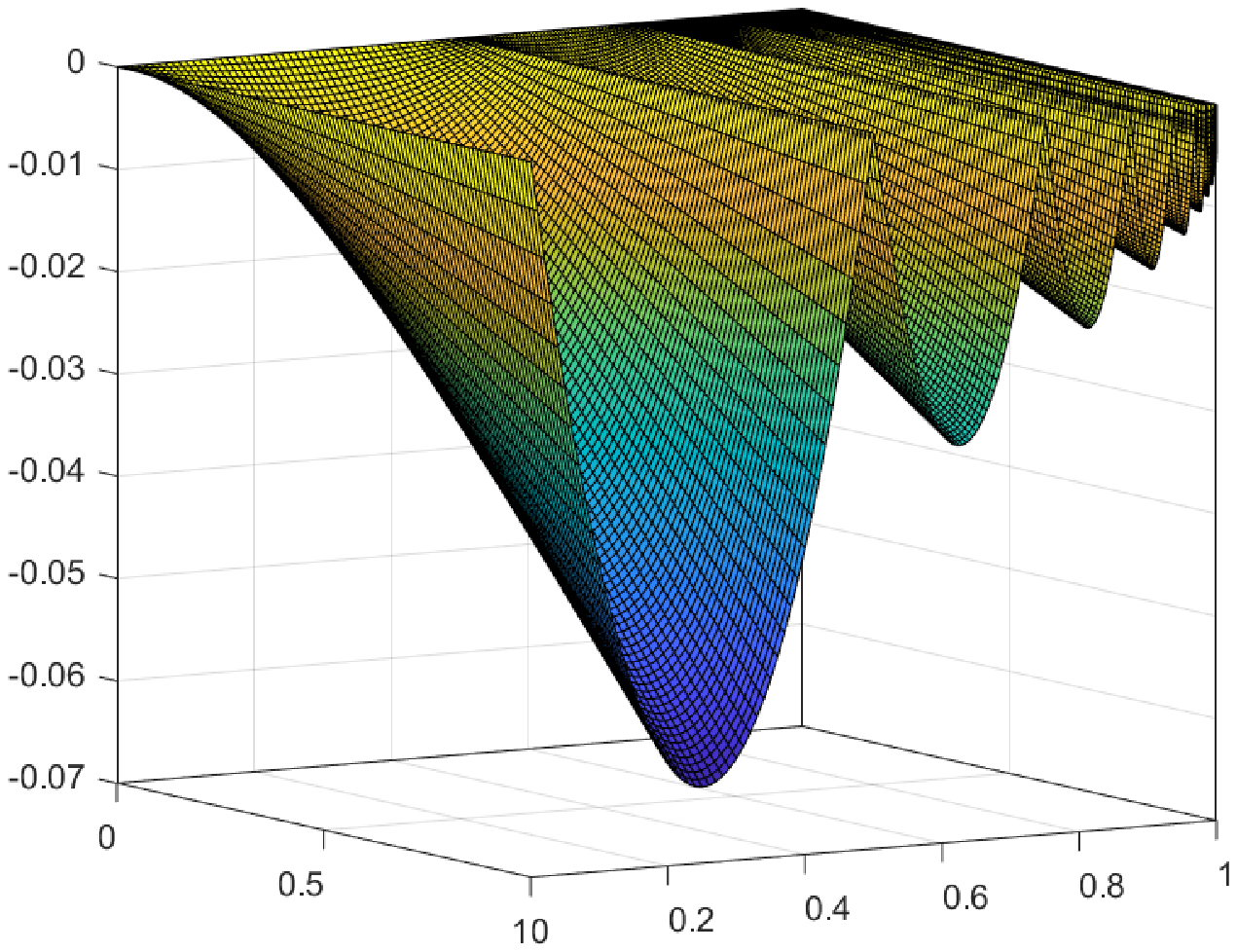}
\caption{ $M(x,y)$}
\end{figure}
This shows that $\mathcal{I}^\gamma M(x,y)$ is finite on $[0,1]\times [0,1]$. Since $M(x,y)$ is a continuous function and of bounded variation on $[0,1)\times [0,1),$ then from Lemma \ref{lm6.2} we know that $\mathcal{I}^\gamma M(x,y)$ is continuous and of bounded variation on  $[0,1)\times [0,1)$ for $0<\gamma_1<1,~0<\gamma_2<1$.\\
Let $0<\delta <1$ and a positive constant $C$, when $(x,y)\in [0,1-\delta)\times [0,1-\delta),$  $\mathcal{I}^\gamma M(x,y)$ is of bounded variation. Let the smallest number of sets of diameter $\delta$ which can cover graph of  $\mathcal{I}^\gamma M(x,y)$ is $\frac{C}{\delta^2}.$ Now, when $(x,y)\in [1,1-\delta)\times [1,1-\delta),$ then the number of $\delta$-cubes that intersect graph of $\mathcal{I}^\gamma M(x,y)$ is at most $\frac{1}{\delta^2}.$\\ Hence, the smallest number of sets of diameter $\delta$ which can cover graph of $\mathcal{I}^\gamma M(x,y)$ is at most $\frac{C+1}{\delta^2}.$  Thus, we have
\begin{dmath*}
\overline{\dim}_B Gr(\mathcal{I}^{\gamma}M,[0,1]\times [0,1])=\overline{\lim_{\delta \to 0}}\frac{\log N_\delta(Gr(\mathcal{I}^\gamma M))}{-\log \delta}
\le \lim_{\delta \to 0} \frac{\log \frac{C+1}{\delta^2}}{-\log \delta}
=2.
\end{dmath*}
From Definition \ref{DefB} and Lemma \ref{lmm}, we know that
\begin{equation*}
\underline{\dim}_B Gr(\mathcal{I}^{\gamma}M,[0,1]\times[0,1]) \geq 2.
\end{equation*}
This implies that 
\begin{equation}\label{6.1}
\dim_B Gr(\mathcal{I}^{\gamma}M,[0,1]\times[0,1])= 2.
\end{equation}
Also, we know that
\begin{equation}\label{6.2}
 2 \le \dim_H Gr(\mathcal{I}^{\gamma}M,[0,1]\times[0,1]) \le \dim_B Gr(\mathcal{I}^{\gamma}M,[0,1]\times[0,1]).
\end{equation}
From Equations \ref{6.1} and \ref{6.2}, we get the required result.
 \end{proof}
 \begin{remark}
 From \cite{V1}, we know that if a function is continuous and of bounded variation in Arzel\'{a} sense, then its fractional integral of mixed R-L type is also continuous and of bounded variation in Arzel\'{a} sense and its fractal dimension is 2. From Theorem \ref{Th6}, we conclude that the box dimension and the Hausdorff dimension of the fractional integral of mixed R-L type of unbounded variational continuous function are also 2. So, $M$ is the such example which is of unbounded variational continuous function but the fractal dimension of its the fractional integral of mixed R-L type is 2. 
 \end{remark}
%%%%%%%%%%%%%%%%%%%%%%%%%%%%%%%%%%%%%%%%%
%%%%%%%%%%%%%%%%%%%%%%%%%%%%%%%%%%%%%%%%%%%%%%%%%%%%%%%%%%%%%%%
%\section{\textbf{Open Problems}}
%\begin{itemize}
%\item[(i)]FD of  Fractal function
%\item[(ii)] If box dim is greater than $2$..
%\end{itemize}

%%%%%%%%%%%%%%%%%%%%%%%%%%%%%%%%%%%%%%%%%%%%%%%%%%%
\subsection*{\textbf{Acknowledgements}} First author has received the  financial supported from the CSIR, India (file no: 09/1058(0012)/2018-EMR-I).
%%%%%%%%%%%%%%%%%%%%%%%%%%%%%%%%%%%%%%%%%%%%%%%%%%%%%%%%%%%%%%
\bibliographystyle{amsplain}

\begin{thebibliography}{10}

\bibitem {CK1} C. R. Adams and J. A. Clarkson, Properties of functions $f(x, y)$ of bounded variation, \textit{Trans. Amer. Math. Soc.} \textbf{36(4)} (1934) 711-730.
\bibitem {MF1}  M. F. Barnsley, \textit{Fractal Everywhere},
Academic Press, Orlando, Florida, 1988.
\bibitem {K} K. Bara\'{n}ski, B. Bárány, J. Romanowska, On the dimension of the graph of the classical Weierstrass function, Adv. Math. 265 (2014) 32–59.
\bibitem {CK2} J. A. Clarkson, C. R. Adams, \textit{On definitions of bounded variation for functions of two variables}, Trans. Amer. Math. Soc. 35 (1933) 824-854.
\bibitem{SC} S. Chandra and S. Abbas, The calculus of bivariate fractal interpolation surfaces, \textit{Fractals} \textbf{29(4)} (2021) 2150066.
\bibitem{F} J. Falconer, Fractal Geometry: Mathematical Foundations and Applications, John Wiley Sons Inc., New York, 1999.
\bibitem{FZ} Z. Feng, Variation and Minkowski dimension of fractal interpolation surface, J. Math. Anal. Appl. 345 (1)(2008)322-334.
\bibitem {FX} Z. Feng, X. Sun, Box-counting dimensions of fractal interpolation surfaces derived from fractal interpolation functions, J. Math. Anal. Appl.,  412 (1) (2014) 416-425.
%\bibitem {MM} M. F. Mart\'{i}nez, M. A. S. Granero, Fractal dimension for fractal structures:a Hausdorff approach revisited, J. Math. Anal. Appl. 409 (2014)321-330.
\bibitem {BR} B.R. Hunt, The Hausdorff dimension of graphs of Weierstrass functions, Proc. Am. Math. Soc. 126(3) (1998) 791–800.
\bibitem {L1} Y. S. Liang, \textit{Box dimensions of Riemann-Liouville fractional integrals of continuous functions of bounded variation}, Nonlin. Anal. 72 (2010) 4304-4306.
\bibitem {L3} Y. S. Liang, \textit{Fractal dimension of Riemann-Liouville fractional integral of 1-dimensional continuous functions}, Fractional Calculus and Applied Analysis, 21(6) 2019 1651-1658.
\bibitem {W} W. Shen, Hausdorff dimension of the graphs of the classical Weierstrass functions, Math. Z. 289(1–2) (2018) 223–266.
\bibitem {Samko} S. G. Samko, A. A. Kilbas, O. I. Marichev, \textit{Fractional Integrals and Derivatives, Theory and Applications}, Gordon and Breach, Yverdon et alibi, (1993).
\bibitem {AA} A. A. Kilbas, H. M. Srivastava and J. J. Trujillo, \textit{Theory and Applications of Fractional Differential Equations}, Elsevier B. V. Amsterdam, Netherlands 2006.
\bibitem {I} I. Podlubny, \textit{Fractional Differential Equations}, Academic Press, San Diego, California-USA 1999.
\bibitem {Ruan} H. J. Ruan, W. Y. Su and K. Yao, Box dimension and fractional integral of linear fractal interpolation functions,
Journal of Approximation Theory, 161 (2009) 187-197.
 \bibitem {FB} F. B. Tatom, The relationship between fractional calculus and fractals, Fractals \textbf{3} (1995) 217-229.
\bibitem {WU1} J. R. Wu, \textit{The effects of the Riemann-Liouville fractional integral on the box dimension of fractal graphs of H\"{o}lder continuous functions}, Fractals 28 (2020) 2050052-1304.
\bibitem {WU2} J. R. Wu, \textit{On a linearity between fractal dimension and order of fractional calculus in H\"{o}lder space}, Appl. Math. Comput. 385 (2020) 125433.
\bibitem {V1} S. Verma, P. Viswanathan, \textit{Bivariate  functions  of  bounded  variation:  Fractal  dimension  and  fractional  integral}, Indag. Math. (2020) 294-309.
\bibitem {V2} S. Verma, \textit{Some Results on Fractal Functions, Fractal Dimensions and Fractional Calculus}, Ph.D. thesis, Indian Institute of Technology Delhi, India, 2020.
\bibitem{V3} S. Verma, Y. S. Liang, \textit{Effect of the Riemann-Liouville fractional integral on unbounded variation points}, arXiv preprint arXiv:2008.11113 (2020).
\bibitem {Yao} K. Yao, W. Y. Su and S. P. Zhou, On the connection between the order of the fractional calculus and the dimension
of a fractal function, Chaos, Solitons and Fractals, 23 (2005) 621-629.

\end{thebibliography}

\end{document}